\documentclass{gtpart}

\usepackage{graphicx}
\usepackage{amsmath, amsfonts, mathrsfs, pinlabel}

%
\title{Algebraic degrees of stretch factors in mapping class groups}

\author{Hyunshik Shin}

\givenname{Hyunshik}
\surname{Shin}

\address{Department of Mathematical Sciences, KAIST\\\newline
         291 Daehak-ro Yuseong-gu Daejeon 34141 South Korea}
\email{hshin@kaist.ac.kr}
\urladdr{http://mathsci.kaist.ac.kr/~hshin}

\keyword{pseudo-Anosov, stretch factor, dilatation, algebraic degree, Thurston's construction, Salem number, starlike graph}
\subject{primary}{msc2010}{57M50, 57M15}

\arxivreference{1401.1836}  
\arxivpassword{}   

%
 
\newcommand{\nc}{\newcommand}
\nc{\dmo}{\DeclareMathOperator}
\nc{\nt}{\newtheorem}

\nt{theorem}{Theorem}
\nt{lemma}[theorem]{Lemma}
\nt{prop}[theorem]{Proposition}
\nt{question}[theorem]{Question}
\nt{cor}[theorem]{Corollary}
\nt{problem}[theorem]{Problem}
\nt*{conj}{Conjecture}
\nt{remark}[theorem]{Remark}

\newtheorem{thm}{Theorem}

\nt*{ques}{Question}
\nt*{proposition}{Proposition}
\nt*{rem}{Remark}

\nc{\N}{\mathbb{N}}
\dmo{\MCG}{Mod}
\dmo{\PMF}{\mathcal{PMF}}
\dmo{\PSL}{PSL}
\dmo{\ra}{\rightarrow}
\dmo{\tr}{tr}
\nc{\F}{\mathcal{F}}
\nc{\st}{\lambda}
\nc{\hst}{\lambda_{H}}
\nc{\A}{\mathcal{A}}
\nc{\E}{\mathcal{E}}
\nc{\G}{\mathcal{G}}

\nc{\margin}[1]{\marginpar{\tiny #1}}
\nc{\p}[1]{\smallskip\noindent{{\bf #1}}}


\begin{document}

\begin{abstract}
We explicitly construct pseudo-Anosov maps on the closed surface of genus $g$ with orientable foliations whose stretch factor $\lambda$ is a Salem number with algebraic degree $2g$. Using this result, we show that there is a pseudo-Anosov map whose stretch factor has algebraic degree $d$, for each positive even integer $d$ such that $d \leq g$.
\end{abstract}

\maketitle

\section{Introduction}
\label{section:introduction}
Let $S_g$ be a closed surface of genus $g \geq 2$. The \textit{mapping class group} of $S_g$, denoted $\MCG(S_g)$, is the group of isotopy classes of orientation preserving homeomorphisms of $S_g$. An element $f \in \MCG (S_g)$ is called a \textit{pseudo-Anosov} mapping class if there are transverse measured foliations $(\F^u,\mu_u)$ and $(\F^s,\mu_s)$, a number $\lambda(f) >1$, and a representative homeomorphism $\phi$ such that
\[
 \phi(\F^u,\mu_u) = (\F^u,\lambda(f) \mu_u) \ \ \textrm{and} \ \ \phi(\F^s,\mu_s) = (\F^s,\lambda(f)^{-1} \mu_s).
\]
In other words, $\phi$ stretches along one foliation by $\lambda(f)$ and the other by $\lambda(f)^{-1}$. The number $\lambda(f)$ is called the \textit{stretch factor} (or \textit{dilatation}) of $f$.

A pseudo-Anosov mapping class is said to be orientable if its invariant foliations are orientable. Let $\hst(f)$ be the spectral radius of the action of $f$ on $H_1(S_g;\R)$. Then
\[
\hst(f) \leq \lambda(f),
\]
and the equality holds if and only if the invariant foliations for $f$ are orientable (see \cite{minimal}). The number $\lambda_{H}(f)$ is called the \textit{homological stretch factor} of $f$.

\begin{ques}
Which real numbers can be stretch factors?
\end{ques}

It is a long-standing open question. Fried \cite{fried} conjectured that $\lambda>1$ is a stretch factor if and only if all conjugate roots of $\lambda$ and $1/\lambda$ are strictly greater than $1/\lambda$ and strictly less than $\lambda$ in magnitude.

Thurston \cite{thurston} showed that a stretch factor $\lambda$ is an algebraic integer whose algebraic degree has an upper bound $6g-6$. More specifically, $\lambda$ is the largest root in absolute value of a monic palindromic polynomial. Thurston gave a construction of mapping classes of $\MCG(S_g)$ generated by two multitwists and he mentioned that his construction can make a pseudo-Anosov mapping class whose stretch factor has algebraic degree $6g-6$. However, he did not give specific examples. 

What happens if we fix the genus $g$? To simplify the question, we may ask which algebraic degrees are possible on $S_g$.

\begin{ques}
What degrees of stretch factors can occur on $S_g$?
\end{ques}

Very little is known about this question. Using Thurston's construction, it is easy to find quadratic integers as stretch factors. Neuwirth and Patterson \cite{neuwirthpatterson} found non-quadratic examples, which are algebraic integers of degree 4 and 6 on surfaces of genus 4 and 6, respectively. Using interval exchange maps, Arnoux and Yoccoz \cite{arnouxyoccoz} gave the first generic construction of pseudo-Anosov maps whose stretch factor has algebraic degree $g$ on $S_g$ for each $g \geq 2$.

\subsection*{Main Theorems}
In this paper, we give a generic construction of pseudo-Anosov mapping classes with stretch factor of algebraic degree $2g$.

Let $c_i$ and $d_j$ be simple closed curves on $S_g$ as in Figure \ref{figure:pA}.
For $k \geq 3$, let us define
\[
f_{g,k} =T_{A_{g,k}} T_{B_g},
\]
where $T_{A_{g,k}} = \left( T_{c_1} T_{c_2} \cdots T_{c_{g-1}} \right) \left( T_{c_g} \right)^k$ and $T_{B_g} = T_{d_1} \cdots T_{d_g}$.
Here, $T_{\alpha}$ is the Dehn twist about $\alpha$. We will show that $f_{g,k}$ is a pseudo-Anosov mapping class and its stretch factor $\lambda(f_{g,k})$ is a special algebraic integer, called Salem number. 
A \textit{Salem number} is an algebraic integer $\alpha>1$ whose Galois conjugates other than $\alpha$ have absolute value less than or equal to 1 and at least one conjugate lies on the unit circle.

\begin{thm}
\label{thm:mainA}
For each $g \geq 2$ and $k \geq 3$, $f_{g,k}$ is a pseudo-Anosov mapping class and satisfies the following properties:
\begin{enumerate}
\item $\st ( f_{g,k} ) = \hst ( f_{g,k} )$,
\item $\st ( f_{g,k} )$ is a Salem number, and
\item $\lim\limits_{g \rightarrow \infty} \lambda( f_{g,k} ) = k-1$.
\end{enumerate}
\end{thm}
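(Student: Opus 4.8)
The plan is to analyze $f_{g,k}$ via Thurston's construction, using the bipartite structure coming from the two multicurves $A_{g,k}=\{c_1,\dots,c_g\}$ and $B_g=\{d_1,\dots,d_g\}$. First I would record the geometric intersection data: from Figure~\ref{figure:pA} one reads off that the curves form a chain-like configuration, so the intersection matrix $N$ (with $N_{ij}=i(c_i,d_j)$) is essentially the incidence matrix of a path/starlike graph on $2g$ vertices. By Thurston's theorem, the representation $T_{A_{g,k}}\mapsto\begin{pmatrix}1&-\sqrt{\mu}\,N\\ 0&1\end{pmatrix}$, $T_{B_g}\mapsto\begin{pmatrix}1&0\\ \sqrt{\mu}\,N^{\mathrm{T}}&1\end{pmatrix}$ (with the twist powers $1,\dots,1,k$ on the $c_i$ side recorded by a diagonal weighting) sends $f_{g,k}$ to a matrix whose spectral radius is the candidate stretch factor, and $f_{g,k}$ is pseudo-Anosov precisely when $NN^{\mathrm{T}}$ (weighted) has Perron root $>4$; I would verify this inequality holds for all $k\ge 3$, $g\ge 2$ by an explicit estimate, which simultaneously proves $f_{g,k}$ is pseudo-Anosov and gives orientability of the foliations (the configuration is chosen so the relevant graph is bipartite, hence the invariant train track is orientable), yielding statement (1): $\lambda(f_{g,k})=\lambda_H(f_{g,k})$.

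For statement (2), I would compute the characteristic polynomial $p_{g,k}(x)$ of the action on $H_1(S_g;\mathbb{R})$, a degree-$2g$ integer polynomial. Since the action comes from a symplectic transformation, $p_{g,k}$ is reciprocal (palindromic), so its roots come in pairs $\{z,1/z\}$. I would then show $p_{g,k}$ is the minimal polynomial by showing it is irreducible, and that exactly one pair lies off the unit circle: the Perron--Frobenius argument gives a unique root $\lambda>1$ and its inverse $1/\lambda$, and I would argue the remaining $2g-2$ roots lie on $|z|=1$. A clean way is to factor through the tridiagonal/starlike structure: $p_{g,k}(x)$ should satisfy a recursion (Chebyshev-like) whose solutions, after the substitution $x+1/x=2\cos\theta$ or $x+1/x=t$, reduce to a low-degree polynomial in $t$ with exactly one real root outside $[-2,2]$. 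Combined with at least one conjugate on the unit circle (forced by irreducibility of a reciprocal polynomial of even degree $>2$ with a real root $>1$, unless it is a product — which irreducibility rules out), this shows $\lambda(f_{g,k})$ is a Salem number.

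For statement (3), I would extract from the recursion/characteristic polynomial the explicit dependence on $g$: writing $p_{g,k}(x)$ via the transfer-matrix product along the chain, the equation for $\lambda$ takes the form $(\text{bounded in }g)\cdot\lambda^{g}(\text{stuff}) + \dots = 0$, and as $g\to\infty$ the dominant balance forces $\lambda\to$ the root of the ``limiting'' finite equation, which I expect to be $x=k-1$ (the extra twisting power $k$ on $c_g$ contributes a factor behaving like $x-(k-1)$ in the limit, while the long chain contributes factors converging to roots on the unit circle). Concretely one shows $p_{g,k}(k-1)\to 0$ at an exponential rate and $p_{g,k}'$ stays bounded away from $0$ near $k-1$, so the Perron root converges to $k-1$.

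The main obstacle I anticipate is statement (2), specifically proving irreducibility of $p_{g,k}(x)$ and pinning down that all conjugates other than $\lambda,1/\lambda$ lie \emph{on} (not merely in) the unit circle. The ``in the closed disk'' part follows from Perron--Frobenius plus reciprocity, but ``on the circle'' requires the Salem-number dichotomy, which in turn needs irreducibility; establishing irreducibility uniformly in $g$ and $k$ (likely via the starlike-graph reduction and a mod-$p$ or Newton-polygon argument on the reduced polynomial in $t=x+1/x$) is the delicate point. If the reduced polynomial happens to have nice structure — e.g. it is, up to known factors, a Chebyshev-type polynomial plus a $k$-dependent perturbation — irreducibility may reduce to a classical criterion, but verifying the perturbation does not create spurious factorizations for special $(g,k)$ will require care.
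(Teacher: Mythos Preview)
Your outline is broadly correct in spirit, but it takes a substantially harder route than the paper, and the obstacle you flag for part~(2) is precisely the one the paper sidesteps rather than confronts.

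For the pseudo-Anosov property and part~(1), the paper does not analyze the Thurston representation or train tracks directly. Instead it observes that the configuration graph $\G(A_{g,k}\cup B_g)$ is a starlike tree $T(2g-2,\,k\cdot 1)$, hence a non-critical dominant Coxeter graph, and then quotes two theorems of Leininger: one giving that $T_A T_B$ is pseudo-Anosov with $\lambda^2+(2-\mu^2)\lambda+1=0$ where $\mu$ is the spectral radius of the graph, and one giving $\lambda=\lambda_H$ whenever $A$ and $B$ can be oriented so that all intersections are positive (which is checked by a picture). Your bipartite/orientable-train-track reasoning is morally the same mechanism, but the paper outsources it entirely.

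For part~(2), the paper does \emph{not} prove irreducibility of $p_{g,k}$; indeed it never establishes irreducibility for general $k$. Instead it invokes a theorem of McKee--Rowlinson--Smyth: for a non-critical dominant starlike tree whose spectral radius $\mu$ is not an integer, the number $\lambda$ defined by $\sqrt{\lambda}+1/\sqrt{\lambda}=\mu$ is automatically a Salem number. The only thing left to check is that $\mu$ is not an integer, which follows from the two-sided bound $k+1<\mu^2<k+1+\tfrac{1}{k-1}$ for $T(n,\,k\cdot 1)$. Thus your ``main obstacle'' --- proving irreducibility uniformly in $g$ and $k$ to force conjugates onto the unit circle --- simply does not arise. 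Irreducibility is proved in the paper only for $k=4$, and only in the service of Theorem~B (realizing degree $2g$), not Theorem~A.

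For part~(3), your transfer-matrix heuristic is replaced by a two-line computation: multiply $p_{g,k}(x)$ by $x-1$ to obtain the four-term polynomial $q_{g,k}(x)=x^{2g+1}-(k-1)x^{2g}+(k-1)x-1$, then check $q_{g,k}(k-1)>0$ while $q_{g,k}(k-1-10^{-m})<0$ for all sufficiently large $g$. This traps the largest root in $(k-1-10^{-m},\,k-1)$ for every $m$, giving the limit.

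In summary: your plan would work if you could carry out the irreducibility step, but that step is both unnecessary for Theorem~A and not actually done in the paper for general $k$. The paper's approach buys a complete proof of~(2) by quoting the starlike-tree literature; your approach would buy, in addition, the degree statement $\deg\lambda(f_{g,k})=2g$ for all $k\ge 3$, which the paper only claims (and proves) for $k=4$.
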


\begin{figure}[tp]
\labellist \small\hair 2pt
\pinlabel $S_g$ at 20 150
\pinlabel $d_1$ at 362 102
\pinlabel $c_1$ at 325 97
\pinlabel $d_2$ at 286 102
\pinlabel $c_2$ at 250 97
\pinlabel $d_g$ at 61 102
\pinlabel $c_g$ at 21 97
\endlabellist
\centering
\includegraphics[scale=0.65]{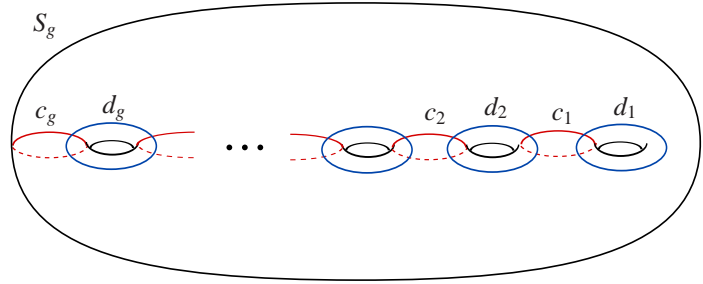}
\caption{Simple closed curves on $S_g$}
\label{figure:pA}
\end{figure}

In particular, we will prove that for $k=4$, the algebraic degree of stretch factor  
is $2g$.
It is known that the degree of the stretch factor of a pseudo-Anosov mapping class 
$f \in \MCG(S_g)$ with orientable foliations is bounded above by $2g$ 
(see \cite{thurston}). Therefore our examples give the maximum degrees of stretch factors
for orientable foliations in $\MCG(S_g)$ for each $g \geq 2$.

\begin{thm}
\label{thm:mainB}
Let $f_g \in \MCG(S_g)$ be the mapping class given by
$$f_g = f_{g,4} = T_{A_{g,4}} T_{B_g}.$$
Then the minimal polynomial of the stretch factor $\lambda(f_g)$ is
$$ p_g(x) = x^{2g}  - 2 \left( \sum_{j=1}^{2g-1} x^j \right) + 1. $$
This implies
$$\deg \st(f_g) = 2g.$$
\end{thm}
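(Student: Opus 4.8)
The plan is to identify $p_g$ with the characteristic polynomial of the homological action of $f_g$ and then to prove that polynomial is irreducible over $\mathbb{Q}$. By part~(1) of Theorem~A, $\lambda(f_g)=\lambda_H(f_g)$ is the spectral radius of $(f_g)_{*}$ acting on $H_1(S_g;\mathbb{R})$, hence a root of $P_g(x):=\det\bigl(xI-(f_g)_{*}\bigr)$, which is a monic, degree-$2g$, reciprocal integer polynomial ($P_g$ is reciprocal because $(f_g)_{*}$ is symplectic). Since $p_g$ is monic, it suffices to prove (a)~$P_g=p_g$ and (b)~$p_g$ is irreducible.

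For (a): the $2g$ curves $d_1,c_1,d_2,c_2,\dots,d_g,c_g$ form a chain on $S_g$ — consecutive ones meet once, the rest are disjoint — so their homology classes form a $\mathbb{Q}$-basis of $H_1(S_g;\mathbb{R})$ in which the algebraic intersection form is tridiagonal. Each $T_{c_i}$ and $T_{d_j}$ acts by the corresponding symplectic transvection, and $T_{c_g}^{\,4}$ by that transvection with coefficient $4$; multiplying these matrices out gives an explicit (essentially pentadiagonal) integer matrix for $(f_g)_{*}$, and a cofactor expansion of its characteristic polynomial, together with small-genus base cases, identifies $P_g$ with $p_g$ — for instance via the recursion $p_{g+1}(x)=x^{2}p_g(x)-(3x-1)(x+1)$, which one checks directly from the displayed formula for $p_g$. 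This is just the $k=4$ case of the characteristic-polynomial computation underlying Theorem~A.

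For (b), the decisive identity is
\[
(x-1)\,p_g(x)=x^{2g+1}-3x^{2g}+3x-1.
\]
First, $p_g(1)=4-4g\neq 0$ and $p_g(-1)=4\neq 0$, so neither $\Phi_1$ nor $\Phi_2$ divides $p_g$. Suppose $\zeta$ is a primitive $n$th root of unity with $n\geq 3$ and $p_g(\zeta)=0$; then $\zeta^{2g}(\zeta-3)=1-3\zeta$. Because $\overline{\zeta-3}=\zeta^{-1}(1-3\zeta)$, this rearranges to $\zeta^{2g-1}(\zeta-3)=\overline{\zeta-3}$, so in $\mathbb{Z}[\zeta]$ the element $\zeta-3$ divides its own complex conjugate, hence — $\zeta$ being a unit — divides $1-3\zeta$; but $3(\zeta-3)+(1-3\zeta)=-8$, so $\zeta-3$ divides $8$, which forces the positive integer $\Phi_n(3)=\mathrm{N}_{\mathbb{Q}(\zeta)/\mathbb{Q}}(\zeta-3)$ to divide $8^{\varphi(n)}$, i.e.\ to be a power of $2$. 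This is impossible: by the Bang--Zsygmondy theorem $3^{n}-1$ has, for every $n\geq 3$, a primitive prime divisor, and any such prime is odd (since $2\mid 3-1$), is distinct from $3$, and divides $\Phi_n(3)$. Hence no cyclotomic polynomial divides $p_g$. Now $\lambda(f_g)$ is a root of $p_g$, so its minimal polynomial $m$ divides $p_g$; by Theorem~A(2), $\lambda(f_g)$ is a Salem number, so $m$ is a Salem polynomial and in particular reciprocal. The argument for Theorem~A(2) moreover shows that $p_g$ has exactly one root outside the closed unit disk, namely $\lambda(f_g)$; hence, writing $p_g=m\cdot h$, the factor $h$ is a monic reciprocal integer polynomial all of whose roots lie on the unit circle, so by Kronecker's theorem $h$ is a product of cyclotomic polynomials — and therefore $h=1$. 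Thus $p_g=m$ is irreducible and $\deg\lambda(f_g)=2g$.

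The heart of the argument is step~(b), and the only real cleverness there is spotting the factorisation $(x-1)p_g(x)=x^{2g+1}-3x^{2g}+3x-1$, after which the divisibility question collapses to the statement that $\Phi_n(3)$ is never a power of $2$, which Bang--Zsygmondy handles uniformly in $g$. (There is also a more computational route: since $p_g(x-1)\equiv x^{2g}\pmod{2}$ with constant term $p_g(-1)=4$, a $2$-adic Newton-polygon argument shows every irreducible factor of $p_g$ has degree divisible by $g$, which combined with $\lambda(f_g)$ being a Salem number again forces irreducibility.) Step~(a) is routine bookkeeping, and the ``one root outside the unit disk'' input invoked at the end is already in hand from the proof of Theorem~A.
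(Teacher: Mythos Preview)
Your irreducibility argument in step (b) is correct and takes a genuinely different route from the paper. Both begin with the same identity $(x-1)p_g(x)=x^{2g+1}-3x^{2g}+3x-1$ and finish with Kronecker's theorem, but the mechanism for excluding cyclotomic factors differs. The paper assumes a primitive $m$th root of unity is a root, separates real and imaginary parts, reduces via sum-to-product identities to $\sin\bigl((2n+1)\pi/m\bigr)=3\sin\bigl((2n-1)\pi/m\bigr)$, and eliminates this by a quadrant-by-quadrant analysis using subadditivity of sine. Your argument is algebraic: from $\zeta^{2g}(\zeta-3)=1-3\zeta$ you extract $(\zeta-3)\mid 8$ in $\mathbb{Z}[\zeta]$, hence $\Phi_n(3)\mid 8^{\varphi(n)}$, and Zsygmondy's theorem (the case $n=2$ being the only exception for base $3$, already handled by $p_g(-1)=4$) supplies an odd prime in $\Phi_n(3)$ for $n\geq 3$. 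Your route is shorter and more conceptual; the paper's is entirely elementary. Either is fine.

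Step (a), however, has a slip. You write that $\lambda_H(f_g)$, being the spectral radius of $(f_g)_{*}$, is ``hence a root of $P_g(x)=\det(xI-(f_g)_{*})$'' and then that $P_g=p_g$. Neither is true: the spectral radius is only the \emph{absolute value} of a dominant eigenvalue, and the paper's Section~3 computes the characteristic polynomial to have alternating middle signs, with dominant root \emph{negative}; one then passes to $p_g$ by the substitution $x\mapsto -x$. So $\lambda(f_g)$ is indeed a root of $p_g$, but not for the reason you give, and the recursion $p_{g+1}(x)=x^{2}p_g(x)-(3x-1)(x+1)$, while a correct identity for the displayed formula, does not by itself connect $p_g$ to the homological action. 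This is easy to repair by following the paper's computation, but as written your (a) does not stand.

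A smaller point: the parenthetical Newton-polygon alternative is wrong. Since $p_g(x)\equiv x^{2g}+1\pmod 2$, one has $p_g(x-1)\equiv (x+1)^{2g}+1\pmod 2$, which equals $x^{2g}$ only when $2g$ is a power of $2$; for $g=3$, for instance, $p_g(x-1)\equiv x^{2}(x^{2}+x+1)^{2}\pmod 2$, and the claimed $2$-adic slope does not materialise.
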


\medskip
The hard part is to show the irreducibility of $p_g(x)$, which is proved in section \ref{section:irreducibility}.

In general, for each $k \geq 3$, the Salem stretch factor of $f_{g,k}$
is the root of the polynomial
\[
p_{g,k}(x) = x^{2g}  - (k-2) \left( \sum_{j=1}^{2g-1} x^j \right) + 1.
\]
It can be shown that $p_{g,k}(x)$ is irreducible for each $k \geq 4$, 
but since the main purpose of this paper is degree 
realization, we will prove only for $k=4$ case that 
the algebraic degree of the stretch factor is $2g$.

Using a branched cover construction, we use Theorem \ref{thm:mainB} to deduce the following partial answer to our question about algebraic degrees.	

\newtheorem*{thm:even_degree}{Corollary \ref{thm:even_degree}}
\begin{thm:even_degree}
For each positive integer $h \leq g/2$, there is a pseudo-Anosov mapping class $\widetilde{f_h} \in \MCG(S_g)$ such that $\deg(\lambda(\widetilde{f_h}))=2h$ and $\st(\widetilde{f_h})$ is a Salem number.
\end{thm:even_degree}

\subsection*{Obstructions.}
 There are three known obstructions for the existence of algebraic degrees. For any pseudo-Anosov $f \in \MCG(S_g)$, we have:
\begin{enumerate}
\item $\deg \lambda(f) \geq 2$,
\item $\deg \lambda(f) \leq 6g-6$, and
\item if $\deg \lambda(f) > 3g-3$, then $\deg \lambda(f)$ is even.
\end{enumerate}

The third obstruction is due to Long \cite{long} and we have another proof in section \ref{section:odd}. It turns out these are the only obstructions for $g=2$. However it is not known whether there are other obstructions of algebraic degrees for $g \geq 3$. By computer search, odd degree stretch factors are rare compared to even degrees. We conjecture that every even degree $d \leq 6g-6$ can be realized as the algebraic degree of stretch factors.
\begin{conj}
On $S_g$, there exists a pseudo-Anosov mapping class with a stretch factor of algebraic degree $d$ for each positive even integer $d \leq 6g-6$.
\end{conj}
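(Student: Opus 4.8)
The statement is a conjecture, so what I can realistically offer is a \emph{strategy} together with an honest identification of the barrier that keeps it open. The plan is to split the target set of even degrees into two regimes and treat them by different means. In the \emph{orientable regime} $2 \le d \le 2g$ I would build on Theorem~\ref{thm:mainB}: the family $f_{g,k}$ already realizes degree $2g$ as a Salem number, and the branched-cover argument behind Corollary~\ref{thm:even_degree} produces the even degrees $2,4,\dots,2\lfloor g/2\rfloor$. To cover the remaining even degrees up to $2g$ one would vary the number of twist curves in the $c_i$ and $d_j$ systems of Figure~\ref{figure:pA}, i.e., consider filling pairs supported on a subsurface of genus $h \le g$ and cap off the complement, so that the homological action has a characteristic factor of degree $2h$ while the foliations stay orientable. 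Because these stretch factors are Salem numbers, the degree is governed by a reciprocal polynomial whose non-dominant conjugates lie in the closed unit disk, which is exactly the structure exploited to prove irreducibility of $p_g(x)$.

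The genuinely hard regime is the \emph{non-orientable regime} $2g < d \le 6g-6$, where, by the discussion after Theorem~\ref{thm:mainB}, orientable foliations can no longer occur and obstruction~(3) forces $d$ to be even. Here I would leave homology behind and work with an $f$-invariant train track $\tau$ carrying the unstable foliation: the stretch factor is the Perron root of the non-negative integral transition matrix $M(f)$ of $f$ acting on the weight space of $\tau$, and $\deg\lambda(f)$ divides the size of $M(f)$, which for a maximal track can be as large as $6g-6$. The plan is to design, for each even $d$ in this range, an explicit filling pair (or an explicit positive word in Dehn twists, in the spirit of Penner's and Thurston's constructions) whose transition matrix is combinatorially rigid enough that its characteristic polynomial factors as an obvious cyclotomic part times a single \emph{bi-Perron} factor $q_{g,d}(x)$ of degree $d$ having $\lambda(f)$ as its dominant root. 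Realizing an arbitrary even $d$ then amounts to choosing how much of the track the map genuinely mixes, which I expect to be solvable by interpolating between a small invariant subtrack (recovering the low-degree orientable constructions) and a maximal track (giving $6g-6$).

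Granting such a reduction, the remaining work is to prove that each candidate factor $q_{g,d}(x)$ is irreducible, so that $\deg\lambda(f)=d$ rather than a proper divisor. For this I would imitate and strengthen the argument of Section~\ref{section:irreducibility}: use that $\lambda(f)$ is the \emph{unique} root of $q_{g,d}$ of maximal modulus by Perron--Frobenius, so any nontrivial factorization splits off a factor not containing $\lambda(f)$; then combine a reduction modulo a small prime $p$ with a Newton-polygon estimate to rule out low-degree factors, and use the reciprocal symmetry coming from the palindromic form of Thurston's polynomials to pair up the surviving conjugates.

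The main obstacle is precisely this irreducibility step in the non-orientable regime, and it is what keeps the statement a conjecture rather than a theorem. In the Salem case the conjugates are confined to the unit circle, a rigid constraint that drives the proof for $p_g(x)$; once the foliations are non-orientable this constraint disappears, the factor $q_{g,d}$ is only bi-Perron, and its conjugates may lie anywhere in the annulus $1/\lambda \le |z| \le \lambda$, so no single elementary criterion is known to handle the whole family uniformly in $g$ and $d$. I would therefore expect the realistic outcome of this plan to be partial: a complete proof for small $g$, where finitely many polynomials can be checked directly (recovering the known case $g=2$), together with those infinite subfamilies whose reductions modulo $p$ happen to be irreducible, with the general even degree remaining open until a uniform irreducibility theorem for Thurston's bi-Perron polynomials becomes available.
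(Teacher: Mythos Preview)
The paper does not prove this statement; it is explicitly labelled a conjecture, and the only evidence offered is the verification for $g=2,3,4,5$ by computer-found examples in Section~\ref{section:examples}. So there is no proof in the paper to compare your proposal against, and you are right to present a strategy rather than a proof. Your description of the non-orientable regime $2g<d\le 6g-6$ is a fair summary of where the problem stands, and you correctly identify the irreducibility of the candidate bi-Perron factor as the genuine barrier.

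There is, however, a concrete gap already in your orientable regime. Corollary~\ref{thm:even_degree} produces only degrees $2h$ with $h\le g/2$, i.e.\ even degrees up to $g$, while Theorem~\ref{thm:mainB} gives degree $2g$; the even degrees strictly between $g$ and $2g$ are not covered by either. Your proposed fix---``filling pairs supported on a subsurface of genus $h\le g$ and cap off the complement''---does not work as stated: if $A\cup B$ fills only a proper subsurface of $S_g$, then $T_A T_B$ is \emph{reducible} on $S_g$, since the boundary of that subsurface is a reducing system. Capping off the complement gives you a pseudo-Anosov map on $S_h$, not on $S_g$, and you are then back to needing a cover $S_g\to S_h$, which is exactly what forces $h\le g/2$ in Section~\ref{section:covers}. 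So even before you reach the hard regime, your plan does not realize, say, degree $2g-2$ on $S_g$ by any general mechanism. The paper has no general mechanism for this range either; the tables in Section~\ref{section:examples} fill it in ad hoc for $g\le 5$.

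A minor point: in the non-orientable regime you write that $\deg\lambda(f)$ ``divides the size of $M(f)$''. What is true is that $\deg\lambda(f)$ is bounded above by the size of the transition matrix (since the minimal polynomial divides the characteristic polynomial); divisibility is not automatic. This does not affect your overall strategy, but it does mean that hitting a \emph{prescribed} even $d$ by tuning the size of an invariant subtrack is less mechanical than your phrasing suggests.
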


In section \ref{section:examples}, we show that the conjecture is true for $g = 2,3,4,$ and $5$.

\subsection*{Outline}
In section \ref{section:background} we will give the basic definitions and results about 
Thurston's consturction.
We will prove 
Theorem \ref{thm:mainA} in section \ref{section:proofbycoxeter} by the theory of Coxeter graphs.
In section \ref{section:covers}, we construct pseudo-Anosov mapping classes via branched covers.
In section \ref{section:odd}, we explain some properties of odd degree stretch factors. 
Section \ref{section:examples} contains examples of even degree stretch factors for $g=2,3,4$ and 5.
Section \ref{section:irreducibility} is where we prove Theorem \ref{thm:mainB}, that is, 
we prove that the minimal polynomial of $\lambda(f_g)$ has degree $2g$.

\subsection*{Acknowledgments}
I am very grateful to my advisor Dan Margalit for numerous help and discussions. I would also like to thank Joan Birman, Benson Farb, Daniel Groves, Chris Judge, and Bal\'azs Strenner for helpful suggestions and comments. I wish to thank an anonymous referee for very helpful comments. Lastly, I would like to thank the School of Mathematics of Georgia Institute of Technology for their hospitality during the time in which the major part of this paper was made.

\medskip
\section{Background}
\label{section:background}

\subsection*{Thurston's construction}
We recall Thurston's construction of mapping classes \cite{thurston}.
For more details on this material, see \cite{primer} or \cite{Leininger04}.

Suppose $A = \{ a_1, \ldots, a_n \}$ is a set of pairwise disjoint simple closed curves,
called a \textit{multicurve}.
We denote the product of Dehn twists $\prod_{i=1}^{n} T_{a_i}$ by $T_A$.
This product is called a \textit{multitwist}.

Suppose $A= \{ a_1, \ldots, a_n \}$ and $B= \{ b_1, \ldots, b_m \}$ are multicurves in a surface $S$
 so that $A \cup B$ \textit{fills} $S$, that is,
the complement of $A \cup B$ is a disjoint union of disks and once-punctured disks.
Let $N$ be the $n \times m$ matrix whose $(j,k)$-entry is the geometric intersection number
$i(a_j, b_k)$ of $a_j$ and $b_k$. Let $\nu = \nu(A \cup B)$ be the largest eigenvalue in magnitude of 
the matrix $NN^t$. If $A \cup B$ is connected, then $NN^t$ is primitive and by
the Perron--Frobenius theorem $\nu$ is a positive real number greater than 1
(see \cite[p. 392 - 395]{primer} for more detail).

Thurston constructed a singular Euclidean structure on $S$ with respect to
which $\langle T_A, T_B \rangle$ acts by affine transformations given by
the representation $\rho:\langle T_A, T_B \rangle \rightarrow \PSL(2,\R)$
\[
\rho(T_A) = 
\left( \begin{array}{cc}
1 & -\nu^{1/2}\\
0 & 1
\end{array} \right)
\quad
\textrm{and}
\quad
\rho(T_B) =
\left( \begin{array}{cc}
1 & 0\\
\nu^{1/2} & 1
\end{array} \right).
\]
In particular, an element $f \in \langle T_A, T_B \rangle$ is pseudo-Anosov
if and only if $\rho(f)$ is a hyperbolic element in $\PSL(2,\R)$
and then the stretch factor $\lambda(f)$ is 
equal to the bigger eigenvalue of $\rho(f)$. For instance,
for a mapping class $f = T_A T_B$,
\[
\rho(T_A T_B) = 
\left( \begin{array}{cc}
1 & -\nu^{1/2}\\
0 & 1
\end{array} \right)
\left( \begin{array}{cc}
1 & 0\\
\nu^{1/2} & 1
\end{array} \right)
=
\left( \begin{array}{cc}
1-\nu & -\nu^{1/2}\\
\nu^{1/2} & 1
\end{array} \right),
\]
and the stretch factor $\lambda(T_A T_B)$ is the bigger root of the characteristic polynomial
\[
\lambda^2 - \lambda (\nu-2) + 1,
\]
provided that $\nu -2 > 2$.

\medskip

\section{Proof by the theory of Coxeter graphs}
\label{section:proofbycoxeter}
We will prove Theorem \ref{thm:mainA} in this section.

For the set $C$ of simple closed curves on the surface $S_g$, 
the \textit{configuration graph} for $C$, denoted $\G(C)$, is the graph
with a vertex for each simple closed curve and an edge for every point of intersection between simple
closed curves.

Let $f_{g,k}$ be a mapping class on $S_g$ defined by
\[
f_{g,k} = T_{A_{g,k}} T_{B_g}, \	 k \geq 3,
\]
as in Theorem \ref{thm:mainA}.
By regarding the multiple power of $T_{c_g}$ as the product of Dehn twists about parallel
(isotopic) simple closed curves $c_{g_1}, \ldots, c_{g_{k}}$,
let us define the multicurves
\begin{equation*}
A_{g,k} = \{ c_1, \ldots, c_{g-1}, c_{g_1}, \ldots, c_{g_{k}} \} \ \ \textrm{and}
\ \ B_g = \{ d_1, \ldots, d_g \}.
\end{equation*}
Then the configuration graph $\G(A_{g,k} \cup B_g)$ is a tree as in Figure \ref{figure:configurationgraph},
\begin{figure}[t]
\centering
\includegraphics[scale=0.7]{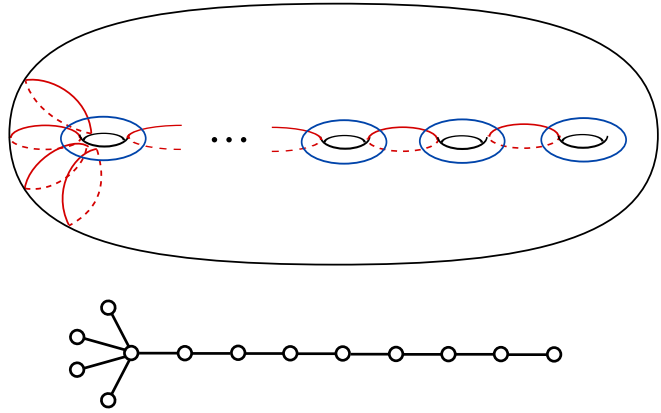}
\caption{Multicurves and configuration graph $\G(A_{g,k} \cup B_g)$}
\label{figure:configurationgraph}
\end{figure}

\medskip

\subsection{Coxeter graphs and mapping class groups}
We say that a finite graph $\G$ is a \textit{Coxeter graph} if there are no self-loops or multiple edges.
For given multicurves $A$ and $B$ such that $A \cup B$ fills the surface $S$,
suppose that the configuration graph $\G = \G(A\cup B)$ is a Coxeter graph.
Leininger proved the following theorem.

\begin{theorem}[\cite{Leininger04} Theorem 8.1 and Theorem 8.4]
Let $\G(A \cup B)$ be a  non-critical dominant Coxeter graph.
Then $T_A T_B$ is a 
pseudo-Anosov mapping class with stretch factor $\lambda$ such that
\[
\lambda^2 + \lambda(2-\mu^2) + 1 = 0,
\]
where $\mu$ is the spectral radius of the graph $\G$.
\label{thm:Leininger}
\end{theorem}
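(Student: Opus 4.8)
The plan is to deduce this from Thurston's construction, as recalled above, by identifying the spectral radius $\mu$ of the configuration graph with the Perron--Frobenius quantity $\nu = \nu(A\cup B)$ that drives that construction. The Coxeter hypothesis does essential work here: since $\G = \G(A\cup B)$ has no multiple edges, every geometric intersection number $i(a_j,b_k)$ is $0$ or $1$, so the intersection matrix $N$ has all entries in $\{0,1\}$; and since $A$ and $B$ are each multicurves, there are no edges among the $a_j$ or among the $b_k$. Hence $\G$ is bipartite with parts $A$ and $B$, and its adjacency matrix is the bipartite double
\[
M = \begin{pmatrix} 0 & N \\ N^t & 0 \end{pmatrix}.
\]

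First I would pin down the eigenvalue identity. One has $M^2 = \operatorname{diag}(NN^t,\, N^tN)$, and $NN^t$ and $N^tN$ share their nonzero spectra, so the largest eigenvalue of $M^2$ is the largest eigenvalue of $NN^t$, which is $\nu$ (nonzero, since $A\cup B$ fills). Because $M$ is a real symmetric matrix (and, being bipartite, has spectrum symmetric about $0$), its spectral radius is $\mu = \sqrt{\nu}$, that is, $\mu^2 = \nu$. This is the one place where the surface geometry enters; everything after it is formal linear algebra.

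Next I would feed this into Thurston's construction. It furnishes the representation $\rho\colon\langle T_A,T_B\rangle\to\PSL(2,\R)$ recalled above; taking the representative of $\rho(T_AT_B)$ with nonnegative trace, we get $\tr\rho(T_AT_B)=\nu-2=\mu^2-2$ and $\det\rho(T_AT_B)=1$, so $\rho(T_AT_B)$ satisfies $\lambda^2-(\mu^2-2)\lambda+1=0$, which is exactly the asserted relation $\lambda^2+\lambda(2-\mu^2)+1=0$. The hypothesis ``non-critical dominant'' is the condition $\mu>2$ (dominant: $\mu\geq 2$; non-critical: $\mu\neq 2$, i.e.\ $\G$ is not of affine type). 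Hence $|\tr\rho(T_AT_B)|=\mu^2-2>2$, so $\rho(T_AT_B)$ is a hyperbolic element of $\PSL(2,\R)$; by Thurston's criterion (recalled above) $T_AT_B$ is pseudo-Anosov, and its stretch factor is the larger eigenvalue of $\rho(T_AT_B)$, i.e.\ the larger root of the displayed quadratic. This gives the theorem.

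The main obstacle is less in the computation than in matching the hypotheses of the two ingredients. Thurston's construction, as recalled here, presumes $A\cup B$ connected so that $NN^t$ is primitive and Perron--Frobenius applies; one must therefore check that a filling pair of multicurves on a connected surface has connected configuration graph (or else argue componentwise), and confirm carefully that ``dominant non-critical'' unpacks to exactly the inequality $\mu>2$ that pushes the trace past $2$, in particular dealing with the excluded borderline affine case $\mu=2$ (the case closest to Lehmer's number). One should also note that the bipartite identification of the adjacency matrix with $M$, and hence the clean identity $\mu^2=\nu$, genuinely uses the no-multiple-edges hypothesis; without it $\G$ would carry multiedges recording intersection numbers $i(a_j,b_k)>1$ and the identity would need restating. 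Once these points are in place the argument is immediate.
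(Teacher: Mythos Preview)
The paper does not actually prove this statement; it is quoted as a result of Leininger (\cite{Leininger04}, Theorems~8.1 and~8.4) and used as a black box. So there is no ``paper's own proof'' to compare against.

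That said, your argument is correct and is essentially the natural derivation from the ingredients the paper already recalls in Section~\ref{section:background}. The key step is exactly the one you isolate: because $\G(A\cup B)$ is a Coxeter graph, it is bipartite on the parts $A$ and $B$ with $\{0,1\}$ biadjacency matrix equal to the intersection matrix $N$, so its adjacency matrix is $\left(\begin{smallmatrix}0&N\\N^t&0\end{smallmatrix}\right)$ and hence $\mu^2=\nu$. Substituting into the quadratic $\lambda^2-(\nu-2)\lambda+1=0$ from Thurston's construction gives the displayed relation, and ``non-critical dominant'' is precisely the condition $\mu>2$ that makes $|\tr\rho(T_AT_B)|=\mu^2-2>2$, hence $\rho(T_AT_B)$ hyperbolic and $T_AT_B$ pseudo-Anosov. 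This is the standard route and is, up to packaging, how Leininger proves it as well. Your caveats about connectedness of $A\cup B$ and the role of the no-multiple-edges hypothesis are the right points to flag.
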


For the definitions and pictures of critical and dominant graphs, see \cite[Section 1]{Leininger04}

For the multicurves $A_{g,k}$ and $B_g$ in Theorem \ref{thm:mainA},
$\G(A_{g,k} \cup B_g)$ is a non-critical dominant Coxeter graph for each $k \geq 3$.
Therefore by Theorem \ref{thm:Leininger} the mapping class $f_{g,k} = T_A T_B$ is pseudo-Anosov for each $k \geq 3$.

\medskip
\subsection{Orientability}
Suppose that $\G$ is a connected Coxeter graph with the set $\Sigma$ of vertices.
There is an associated quadratic form $\Pi_{\G}$ on $R^{\Sigma}$ and a faithful representation
\[
\Theta: \mathscr{C}(\G) \ra \textrm{O}(\Pi_{\G}),
\]
where $\mathscr{C}(\G)$ is a Coxeter group with generating set $\Sigma$,
$\textrm{O}(\Pi_{\G})$ is the orthogonal group of the quadratic form $\Pi_{\G}$, and
each generator $s_i \in \Sigma$ is represented by a reflection.
Leininger also proved the following theorem.

\begin{theorem}[\cite{Leininger04} Theorem 8.2 ]
\label{thm:Leininger_homology}
Let $\G(A \cup B)$ be a Coxeter graph and suppose that $A$ and $B$ can be oriented so that all
intersections of $A$ with $B$ are positive. Then there exists a homomorphism
\[
\eta: \R^{\Sigma} \ra H_1(S;\R)
\]
such that 
\[
(T_A T_B)_{\ast} \circ \eta = -\eta \circ \Theta(\sigma_A \sigma_B),
\]
where $\sigma_A \sigma_B$ is an element in $\mathscr{C}(\G)$ corresponding to $T_A T_B$.\\
Moreover, $\Theta(\sigma_A \sigma_B)|_{\textrm{ker}(\eta)} = -I$ and $\eta$ preserves spectral radii.
\end{theorem}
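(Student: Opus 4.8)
The plan is to prove the statement as the homological counterpart of Theorem~\ref{thm:Leininger}, by making precise the classical dictionary between Dehn twists and Coxeter reflections. First I would fix orientations on the curves of $A$ and of $B$ realizing the positivity hypothesis, and define $\eta\colon\R^{\Sigma}\to H_1(S;\R)$ on the natural basis $\{e_{\gamma}\}_{\gamma\in A\cup B}$ by $\eta(e_{\gamma})=[\gamma]$ (after possibly reversing the orientations of the curves of $B$, a bookkeeping choice dictated by the sign conventions below), extended linearly. Writing $N$ for the intersection matrix of Section~\ref{section:background}, the Coxeter--graph hypothesis forces $N_{jk}\in\{0,1\}$, and positivity of the intersections forces the algebraic intersection numbers $\hat{i}([a_j],[b_k])$ to agree with it, since a single transverse point contributes $\pm1$ and positivity fixes the sign.

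The first main step is the intertwining relation $(T_A T_B)_{\ast}\circ\eta=-\eta\circ\Theta(\sigma_A\sigma_B)$. On the group side, pairwise disjointness of the curves inside $A$ (and inside $B$) makes the corresponding generators of $\mathscr{C}(\G)$ commute, so $\sigma_A$ and $\sigma_B$ are honest involutions, and in Tits' geometric representation $\Theta$ one writes $\Theta(\sigma_A)$, $\Theta(\sigma_B)$, hence $\Theta(\sigma_A\sigma_B)$, as explicit block matrices in the curve basis whose off-diagonal blocks are $N$ and $N^{t}$. On the homology side, the Picard--Lefschetz formula $(T_A)_{\ast}(x)=x+\sum_j\hat{i}(x,a_j)[a_j]$ (and likewise for $T_B$) together with $\hat{i}([a_j],[b_k])=N_{jk}$ and the disjointness inside $A$ and inside $B$ shows that $(T_A)_{\ast}$ and $(T_B)_{\ast}$ preserve $\mathrm{Im}\,\eta$ and act there by block matrices of exactly the same shape, differing from $\Theta(\sigma_A),\Theta(\sigma_B)$ only by the signs absorbed into the definition of $\eta$. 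A short block computation then produces the intertwining.

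Next I would treat the kernel. From the block form of $\Theta(\sigma_A\sigma_B)$ one reads off that its $(-1)$--eigenspace is precisely $\{(x,y): N^{t}x=0,\ Ny=0\}$. If $v\in\ker\eta$ corresponds to a relation $\sum_j x_j[a_j]=\pm\sum_k y_k[b_k]$ in $H_1(S;\R)$, then pairing the relation with $[b_k]$ and using that the curves of $B$ are pairwise disjoint gives $(N^{t}x)_k=\hat{i}\bigl(\sum_j x_j[a_j],[b_k]\bigr)=0$, and symmetrically $(Ny)_j=0$; thus $\ker\eta$ lies in the $(-1)$--eigenspace, it is in particular $\Theta(\sigma_A\sigma_B)$--invariant, and $\Theta(\sigma_A\sigma_B)|_{\ker\eta}=-I$. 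For the spectral-radius claim, observe that every Dehn twist about a curve of $A\cup B$ acts trivially on $H_1(S;\R)/\mathrm{Im}\,\eta$, since $(T_{\gamma})_{\ast}x-x$ is a multiple of $[\gamma]\in\mathrm{Im}\,\eta$; hence $(T_A T_B)_{\ast}$ does too, so its spectral radius $\rho$ on $H_1(S;\R)$ equals that of its restriction to $\mathrm{Im}\,\eta$, which is at least $1$ because that restriction has determinant $1$. Dually, $\Theta(\sigma_A\sigma_B)$ is $-I$ on $\ker\eta$ and, by the intertwining, induces on $\R^{\Sigma}/\ker\eta\cong\mathrm{Im}\,\eta$ a map conjugate to $-(T_A T_B)_{\ast}|_{\mathrm{Im}\,\eta}$, so its spectral radius is $\max\bigl\{1,\ \rho\bigl((T_A T_B)_{\ast}|_{\mathrm{Im}\,\eta}\bigr)\bigr\}=\rho\bigl((T_A T_B)_{\ast}|_{\mathrm{Im}\,\eta}\bigr)$. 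Hence $\eta$ preserves spectral radii.

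I expect the genuinely delicate point to be the sign bookkeeping in the intertwining step: one must pin down mutually compatible conventions for the Picard--Lefschetz formula, for the reflections $\Theta(s_i)$, and for $\eta$ (whether to reverse the orientations of $B$), and it is exactly the hypothesis that $A$ meets $B$ in only positive intersections that allows the off-diagonal blocks of the homological action to be identified with the unsigned intersection matrix $N$ occurring in $\Theta$ --- without orientability these two operators are not conjugate, their characteristic polynomials genuinely differ, and no such $\eta$ can exist.
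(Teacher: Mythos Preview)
The paper does not prove this theorem; it is quoted from Leininger \cite{Leininger04} and used as a black box. So there is no ``paper's own proof'' to compare against. That said, your outline is correct and is essentially the standard argument (and, as far as I can tell, the one Leininger gives): define $\eta$ on basis vectors by sending $e_\gamma$ to the homology class $[\gamma]$ with a sign flip on the $B$--curves, compare the Picard--Lefschetz formula with the block form of $\Theta(\sigma_A)$ and $\Theta(\sigma_B)$, and read everything off. Your computation that the $(-1)$--eigenspace of $\Theta(\sigma_A\sigma_B)$ is $\{(x,y):N^{t}x=0,\ Ny=0\}$ is right, and your pairing argument cleanly shows $\ker\eta$ sits inside it. The spectral-radius step is also fine once you note, as you do, that the restriction of $(T_AT_B)_\ast$ to $\mathrm{Im}\,\eta$ has determinant $1$ (from the block-triangular decomposition of the symplectic map with identity on the quotient), hence spectral radius $\geq 1$.

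One point worth tightening: you assert the sign convention (``possibly reversing the orientations of the curves of $B$'') without carrying it through. If you actually compute, with $\eta(e_{a_j})=[a_j]$ and $\eta(e_{b_k})=[b_k]$ the intertwining fails on the cross term; the correct choice is $\eta(e_{b_k})=-[b_k]$ (or equivalently reverse the $a$'s). This is exactly the ``delicate point'' you flag at the end, but in a written proof you should fix the convention at the outset and verify the intertwining on both $e_{a_j}$ and $e_{b_k}$ explicitly rather than leaving it as a remark.
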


\begin{figure}[htb]
\centering
\includegraphics[scale=0.7]{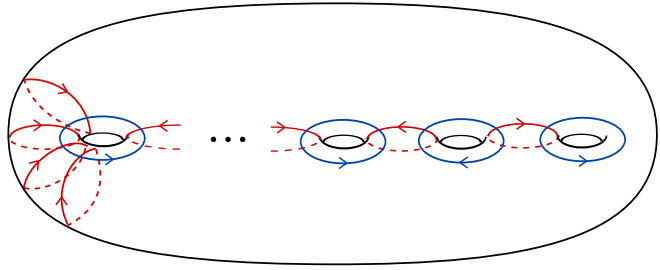}
\caption{Orientation of positive intersections}
\label{figure:positiveorientation}
\end{figure}

Theorem \ref{thm:Leininger_homology} implies that if $A$ and $B$ can be oriented as in the theorem,
then the stretch factor of a pseudo-Anosov mapping class is equal to the spectral radius of the action on homology.
For multicurves $A_{g,k}$ and $B_g$ in Theorem \ref{thm:mainA}, they can be oriented so that all intersections are positive
as in Figure \ref{figure:positiveorientation}. Therefore we have
\[
\st ( f_{g,k} ) = \hst ( f_{g,k} )
\]
and the invariant foliations for $f_{g,k}$ are orientable.

It is also possible to directly compute the action on the first homology. 
Consider the mapping class $f_g = T_{A_{g,4}} T_{B_g}$ as in Theorem \ref{thm:mainB}.
Let us choose a basis $\{ a_1, b_1, \dots, a_g, b_g \}$ for $H_1(S_g)$ 
as in Figure \ref{figure:homology}.

\begin{figure}[htb]
\labellist \small\hair 2pt
\pinlabel $S_g$ at 20 150
\pinlabel $a_1$ at 362 100
\pinlabel $b_1$ at 325 100
\pinlabel $a_2$ at 286 100
\pinlabel $b_2$ at 250 100
\pinlabel $a_3$ at 210 100
\pinlabel $a_g$ at 61 100
\pinlabel $b_g$ at 21 100
\endlabellist
\centering
\includegraphics[scale=0.70]{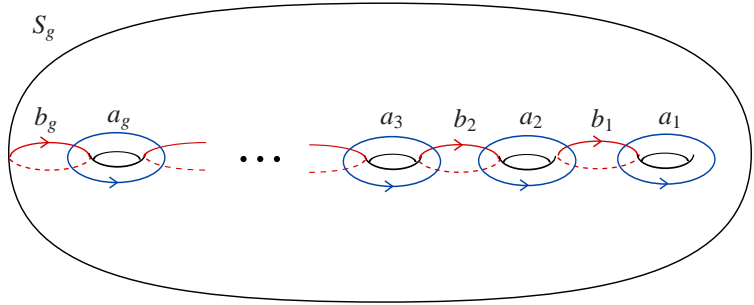}
\caption{A basis for $H_1(S_g)$.}
\label{figure:homology}
\end{figure}

By computing images of each basis element under $f_g$, we can get the action on $H_1(S_g)$
\[
\left( \begin{array}{rrrrcr}
1 & -1 & 0 & ~0 & \cdots &  0\\
1 & 0 & -1 & 0 & \cdots &  0\\
\vdots & \vdots & \vdots & \vdots & \ddots &  \vdots\\
1 & 0 & 0 & 0 & \cdots &  -1\\
4 & 0 & 0 & 0 & \cdots &  -3
\end{array} \right).
\]
By induction, the characteristic polynomial $h_g(x)$ of the homological action is
\[
h_g(x) = x^{2g}  + 2\left( \sum_{j=1}^{2g-1} (-1)^j x^j \right) + 1.
\]
Since the largest root of $h_g(x)$ in magnitude is a negative real number, we can deduce that the stretch factor 
$\st(f_g)$ is the root of $h_g(-x)$. Specifically, $\st(f_g)$ is the root of
$$ p_g(x) = x^{2g}  - 2 \left( \sum_{j=1}^{2g-1} x^j \right) + 1. $$

In a similar way, one can get the polynomial for $\st(f_{g,k})$, which is
$$ p_{g,k}(x) = x^{2g}  - (k-2) \left( \sum_{j=1}^{2g-1} x^j \right) + 1. $$


\medskip
\subsection{Salem numbers and spectral properties of starlike trees}
The configuration graph $\G(A_{g,k} \cup B_g)$ for $f_{g,k}$ is a special type of graphs, called a starlike
tree, and its relation to Salem numbers is studied in \cite{McKeeRowlinsonSmyth99}.
A \textit{starlike tree} is a tree 	with at most one vertex of degree $>2$.
Let $T=T(n_1, n_2, \ldots, n_k)$ be the starlike tree with $k$ arms of
$n_1,n_2,\ldots,n_k$ edges.

\begin{theorem}[\cite{McKeeRowlinsonSmyth99} Corollary 9]
Let $T = T(n_1, n_2, \ldots, n_k)$ be a starlike tree and let $\mu$ be the spectral radius of $T$.
Suppose that $\mu$ is not an integer and $T$ is a non-critical dominant graph.
Then $\lambda >1$, defined by $\sqrt{\lambda} + 1/\sqrt{\lambda} = \mu$, is a Salem number.
\label{thm:starlikegraph}
\end{theorem}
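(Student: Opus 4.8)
The plan is to convert the theorem into a statement about the location of the eigenvalues of the adjacency matrix of $T$, and then read off the Salem property from a standard substitution. Writing $x = \sqrt{\lambda} + 1/\sqrt{\lambda}$, one has $x^2 = \lambda + 2 + 1/\lambda = (\lambda+1)^2/\lambda$. Let $\phi_T(x)$ be the characteristic polynomial of the adjacency matrix $A$ of $T$; since $T$ is a tree on $n$ vertices it is bipartite, so $\phi_T$ is even in $x$ when $n$ is even and is $x$ times an even polynomial when $n$ is odd. Substituting $x^2 = (\lambda+1)^2/\lambda$ and clearing denominators by a power of $\lambda$ produces a reciprocal (palindromic) integer polynomial $R(\lambda)$ of degree $n$ whose roots, with multiplicity, are obtained from the eigenvalues of $T$: each $\pm$-pair $\{x_i,-x_i\}$ contributes a reciprocal pair $\{\lambda_i, 1/\lambda_i\}$ determined by $\lambda_i + 1/\lambda_i = x_i^2 - 2$. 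The number $\lambda$ of the statement is the root coming from $x_i = \mu$. Since the minimal polynomial of $\lambda$ divides $R$, it suffices to show: (i) $\lambda$ is real and $>1$; (ii) every root of $R$ other than $\lambda$ has absolute value at most $1$; and (iii) at least one Galois conjugate of $\lambda$ has absolute value exactly $1$.

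The heart of the argument is locating the eigenvalues of $T$. By Perron--Frobenius, $\mu = \rho(T)$ is a simple eigenvalue of largest absolute value, and bipartiteness forces $-\mu$ to be an eigenvalue as well. Now I would apply Cauchy interlacing to the principal submatrix obtained by deleting the unique branch vertex $v$: the graph $T-v$ is a disjoint union of paths $P_{n_1},\dots,P_{n_k}$, and $\rho(P_m) = 2\cos(\pi/(m+1)) < 2$, so every eigenvalue of $T-v$ lies in the open interval $(-2,2)$. Interlacing then forces the second-largest eigenvalue of $T$ to be $<2$ and the second-smallest to be $>-2$; hence every eigenvalue of $T$ other than $\mu$ and $-\mu$ lies in $(-2,2)$. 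Since $T$ is non-critical dominant we have $\mu > 2$, so $\lambda + 1/\lambda = \mu^2 - 2 > 2$, giving $\lambda>1$ real, which is (i). For any eigenvalue $x_i$ with $|x_i|\le 2$ the relation $\lambda_i + 1/\lambda_i = x_i^2 - 2 \in [-2,2]$ puts $\lambda_i$ on the unit circle, while the pair from $x_i=\pm\mu$ is $\{\lambda,1/\lambda\}$ with $|1/\lambda|<1$; thus (ii) holds.

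The remaining point (iii), producing a conjugate of $\lambda$ on the unit circle, is where the hypothesis on $\mu$ is used, and I expect it to be the main obstacle. All conjugates of $\mu$ are eigenvalues of $T$, hence each is either $\pm\mu$ or lies in $[-2,2]$. If some conjugate $\mu'$ has $|\mu'|\le 2$, then applying the corresponding Galois automorphism to $\lambda + 1/\lambda = \mu^2 - 2$ yields a conjugate of $\lambda$ on the unit circle. The only way to avoid this is for every conjugate of $\mu$ to equal $\pm\mu$, i.e.\ for the minimal polynomial of $\mu$ to be $x^2 - \mu^2$ with $\mu^2 \in \mathbb{Z}$; this is precisely the degenerate situation in which $\lambda$ is a quadratic Pisot number, and it is ruled out once $\mu$ is not of that special form (in particular when $\mu$ is not an integer). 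One should also note that $T$ genuinely has an eigenvalue in $[-2,2]$: otherwise $\sum_i x_i^2 = \tr(A^2) = 2(n-1)$ would force $4n < 2(n-1)$, which is impossible; so $R$ really does have roots on the unit circle.

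Combining (i)--(iii), the minimal polynomial of $\lambda$ has $\lambda>1$ as a root, all its other roots in the closed unit disk, and at least one on the unit circle, so $\lambda$ is a Salem number. The only genuinely delicate parts of the plan are the strictness of the interlacing bound in the key step (which rests on $\rho(P_m) < 2$) and the Galois-theoretic bookkeeping in (iii); once those are in place the rest is formal. I would expect the write-up to spend most of its effort making the eigenvalue-location step and the exclusion of the quadratic Pisot case fully rigorous.
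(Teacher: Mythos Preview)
The paper does not prove this theorem; it is quoted from McKee--Rowlinson--Smyth and used as a black box. Your outline is essentially the standard argument for this kind of result: locate the spectrum of $T$ via interlacing after deleting the branch vertex, push eigenvalues through the substitution $x^2-2=\lambda+1/\lambda$, and read off the Salem property. Steps (i) and (ii) are fine as written.

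There is, however, a genuine gap in your treatment of (iii). You correctly isolate the obstruction: if every Galois conjugate of $\mu$ lies in $\{\pm\mu\}$, then the minimal polynomial of $\mu$ is $x^2-\mu^2$ with $\mu^2\in\mathbb{Z}$, and $\lambda$ is a quadratic Pisot number rather than a Salem number. But your parenthetical ``in particular when $\mu$ is not an integer'' is wrong: the hypothesis ``$\mu$ is not an integer'' does \emph{not} exclude $\mu^2\in\mathbb{Z}$. Concretely, the star $K_{1,5}=T(1,1,1,1,1)$ is a non-critical dominant starlike tree with $\mu=\sqrt{5}$, which is not an integer; yet $\lambda+1/\lambda=3$ gives $\lambda=(3+\sqrt{5})/2$, a Pisot number with no conjugate on the unit circle. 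So either the hypothesis as transcribed in the paper is slightly weaker than what is actually needed, or the original source excludes this case by convention. In any event, what your argument really requires is $\mu^2\notin\mathbb{Z}$, not merely $\mu\notin\mathbb{Z}$. (Your trace argument that $R$ has \emph{some} root on the unit circle is correct but irrelevant here, since that root need not be a conjugate of $\lambda$.)

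For the paper's application this does not matter: the very next theorem cited shows $k+1<\mu^2<k+1+\tfrac{1}{k-1}$ for the trees $T(2g-2,k\cdot 1)$, so $\mu^2$ lies strictly between consecutive integers and the degenerate case cannot occur. But as a standalone proof of the statement exactly as written, your step (iii) does not close.
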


The configuration graph
$\G(A_{g,k} \cup B_g)$ in Theorem \ref{thm:mainA} is a non-critical dominant starlike tree 
$$ T(2g-2, \underbrace{1, 1, \ldots, 1}_{k\textrm{-times}}), \ k \geq 3$$
and we will denote it by $T(2g-2, k \cdot 1$). 
The fact that the spectral radius of $T(2g-2, k \cdot 1)$ is not an integer follows from the following
theorem.

\begin{theorem}[\cite{Shin15}]
If $\mu$ is the spectral radius of the starlike tree $T(n, k \cdot 1)$, then
\[
\sqrt{k+1} < \mu < \frac{k}{\sqrt{k-1}}
\]
for $n \geq 1$ and $k \geq 3$.
\end{theorem}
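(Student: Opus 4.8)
The plan is to obtain the lower bound from a subgraph comparison and the upper bound from the \emph{secular equation} satisfied by $\mu$. Write $\chi_G(x) = \det\bigl(xI - A(G)\bigr)$ for the characteristic polynomial of the adjacency matrix of a graph $G$, and abbreviate $q_m(x) = \chi_{P_m}(x)$, where $P_m$ is the path on $m$ vertices, so that $q_0 = 1$, $q_1(x) = x$ and $q_m = x\,q_{m-1} - q_{m-2}$; all zeros of $q_m$ lie in $(-2,2)$, hence $q_m(x) > 0$ for $x \geq 2$. For the lower bound: when $n \geq 2$ the tree $T = T(n, k\cdot 1)$ is connected and properly contains the star $K_{1,k+1}$ (the branch point together with its $k+1$ neighbours), and since for a connected graph the adjacency spectral radius strictly exceeds that of any proper subgraph (Perron--Frobenius), $\mu = \mu(T) > \mu(K_{1,k+1}) = \sqrt{k+1}$; in particular $\mu > 2$ because $k \geq 3$. (For $n = 1$ the tree is $K_{1,k+1}$ itself, with $\mu = \sqrt{k+1}$; in the application one always has $n = 2g - 2 \geq 2$.)

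For the upper bound, expand $\chi_T$ along the branch point $v$ by the standard recursion (valid since $T$ is a tree) $\chi_G(x) = x\,\chi_{G - v}(x) - \sum_{u \sim v}\chi_{G - v - u}(x)$; since $T - v$ consists of one path on $n$ vertices and $k$ isolated vertices, one obtains
\[
\chi_T(x) \;=\; x^{k}\, q_n(x)\, E(x), \qquad E(x) \;:=\; x - \frac{k}{x} - \frac{q_{n-1}(x)}{q_n(x)} .
\]
On $(2,\infty)$ the factor $x^{k} q_n(x)$ is positive, so there $\mu$ is the largest zero of $E$. The heart of the matter is a sharp upper bound on $r_m(x) := q_{m-1}(x)/q_m(x)$: the recursion yields $r_m(x) = 1/\bigl(x - r_{m-1}(x)\bigr)$ with $r_1(x) = 1/x$, so for a fixed $x > 2$ the value $r_n(x)$ is an iterate of the M\"obius map $\psi_x(t) = 1/(x - t)$ applied to $1/x$. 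This map is increasing, has fixed points $t_{\pm}(x) = \tfrac12\bigl(x \pm \sqrt{x^2 - 4}\bigr)$, and carries the interval $(0, t_-(x))$ into itself; since $1/x$ lies in this interval, $r_n(x) < t_-(x)$ for every $x > 2$. Consequently $E(x) > D(x)$ for all $x > 2$, where
\[
D(x) \;:=\; \Bigl(x - \frac{k}{x}\Bigr) - t_-(x) \;=\; \frac{x + \sqrt{x^2 - 4}}{2} - \frac{k}{x} ,
\]
and a short computation shows that $D$ is strictly increasing on $(2,\infty)$ with $D(x_0) = 0$ at $x_0 := k/\sqrt{k-1}$ (here $x_0 - k/x_0 = 1/\sqrt{k-1}$ and, using $x_0^2 - 4 = (k-2)^2/(k-1)$, also $t_-(x_0) = 1/\sqrt{k-1}$). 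Hence $E(x) > D(x) \geq 0$ for all $x \geq x_0$, which forces the largest zero $\mu$ of $E$ to satisfy $\mu < x_0 = k/\sqrt{k-1}$. Together with the lower bound, $\sqrt{k+1} < \mu < k/\sqrt{k-1}$.

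The step I expect to be the main obstacle is the bound $r_n(x) < t_-(x)$ — that the path-ratio never overshoots the attracting fixed point of $\psi_x$ — together with the clean numerical coincidence that this bound meets $x - k/x$ exactly at $x_0 = k/\sqrt{k-1}$; the remaining estimates are routine. Conceptually the value $k/\sqrt{k-1}$ is inevitable: it is $\lim_{n\to\infty}\mu\bigl(T(n, k\cdot 1)\bigr)$ (equivalently, the $\ell^2$-spectral radius of the infinite starlike tree $T(\infty, k\cdot 1)$), because letting $n \to \infty$ replaces $r_n(x)$ by the fixed point $t_-(x)$, hence $E(x)$ by $D(x)$, whose unique zero in $(2,\infty)$ is $x_0$; this is why the upper bound is uniform in $n$ and sharp.
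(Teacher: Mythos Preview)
The paper does not prove this theorem; it is quoted from \cite{Shin15} and used as a black box, so there is no in-paper argument to compare against. Your proof is correct and self-contained.

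A few remarks. Your lower-bound observation is accurate: at $n=1$ the tree $T(1,k\cdot 1)$ \emph{is} the star $K_{1,k+1}$, so $\mu=\sqrt{k+1}$ and the strict inequality in the stated theorem fails; as you note, the paper only applies the result with $n=2g-2\geq 2$, where strictness holds by the proper-subgraph/Perron--Frobenius argument. For the upper bound, the secular-equation approach is exactly the right tool: the identity $\chi_T(x)=x^{k+1}q_n(x)-kx^{k-1}q_n(x)-x^{k}q_{n-1}(x)$ is correct, the continued-fraction bound $r_n(x)<t_-(x)$ for $x>2$ follows cleanly from the invariant interval $(0,t_-(x))$ of $\psi_x$ (and $1/x<t_-(x)$ since $(x^2-2)^2>x^2(x^2-4)$), and the evaluation $D(x_0)=0$ at $x_0=k/\sqrt{k-1}$ is a genuine computation rather than a coincidence---it encodes, as you say, the limiting spectral radius as $n\to\infty$. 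The monotonicity of $D$ on $(2,\infty)$ is immediate since both $t_+(x)$ and $-k/x$ are increasing. Nothing is missing.
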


Thus for the starlike tree $T(n, k \cdot 1)$, the spectral radius satisfies
$$ k+1 < \mu^2 <\frac{k^2}{k-1} = k+1 +\frac{1}{k-1}.$$
Therefore $\mu$ is not an integer and by Theorem \ref{thm:starlikegraph} $\st(f_{g,k})$ is a Salem number.

Moreover, the proof of Corollary 2.1 of Lepovi\'c--Gutman \cite{LepovicGutman01}
implies that
$$\lim\limits_{g \rightarrow \infty} \lambda( f_{g,k} ) = k-1.$$
For completeness, we reprove this here.

Recall that $\st(f_{g,k})$ is the largest root of 
\begin{equation*}
p_{g,k}(x) = x^{2g}  - (k-2) \left( \sum_{j=1}^{2g-1} x^j \right) + 1.
\label{eqn:charpoly}
\end{equation*}
By multiplying $p_{g,k}(x)$ by $x-1$, the stretch factor $\st(f_{g,k})$
is the largest root in magnitude of
$$q_{g,k}(x) = x^{2g+1} - (k-1) x^{2g} + (k-1) x -1.$$
We have $ q_{g,k}(k-1) = (k-1)^2 - 1 >0$, and for any fixed positive integer $m$,
{\setlength\arraycolsep{2pt}
\begin{eqnarray*}
q_{g,k}\left( k-1-\frac{1}{10^m} \right) &=& \left( k-1-\frac{1}{10^m} \right)^{2g} \left(-\frac{1}{10^m} \right) + (k-1) \left( k-1-\frac{1}{10^m} \right) - 1
\end{eqnarray*}}
Hence $q_{g,k}\left(k-1-10^{-m}\right) < 0$ for sufficiently large values of $g$
and therefore $p_{g,k}(x)$ has a root on the interval $(k-1-10^{-m}, k-1)$. This implies
$$\lim\limits_{g \rightarrow \infty} \lambda( f_{g,k} ) = k-1.$$
This completes the proof of Theorem \ref{thm:mainA}.

\begin{rem}
\emph{A positive integer cannot be a stretch factor (which is an algebraic integer of degree 1). 
However, Theorem \ref{thm:mainA} implies that for sufficiently large genus $g$ there is a stretch factor which is a Salem number arbitrarily close to 
a given integer $k-1$ for each $k \geq 3$.
}
\end{rem}

\medskip
\section{Branched Covers}
\label{section:covers}
Lifting a pseudo-Anosov mapping class via a covering map is one way to construct another pseudo-Anosov mapping class. If there is a branched cover $\widetilde{S} \rightarrow S$ and a pseudo-Anosov mapping class $f \in \MCG(S)$, then there is some $k \in \N$ such that $\MCG(\widetilde{S})$ has a pseudo-Anosov element $\widetilde{f}$ which is a lift of $f^k$ and hence $\lambda(\widetilde{f}) = \lambda(f)^k$.

\begin{cor}
\label{thm:even_degree}
Let $ g \geq 2$. For each positive integer $h \leq g/2$, there is a pseudo-Anosov mapping class $\widetilde{f_h} \in \MCG(S_g)$ such that $\deg(\lambda(\widetilde{f_h}))=2h$ and $\st(\widetilde{f_h})$ is a Salem number.
\end{cor}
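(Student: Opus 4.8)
The plan is to deduce the corollary from Theorem~\ref{thm:mainB} together with the branched covering principle recalled at the start of this section. Fix $g\geq 2$ and an integer $h$ with $2\leq h\leq g/2$, and put $m=g-2h+1$, so $m\geq 1$. By Theorem~\ref{thm:mainB} applied to $S_h$ there is a pseudo-Anosov $f_h\in\MCG(S_h)$ whose stretch factor $\alpha:=\lambda(f_h)$ is a Salem number of degree $2h$. I will construct a double branched cover $\pi: S_g\to S_h$, lift a power $f=f_h^{k}$ to a pseudo-Anosov $\widetilde{f_h}\in\MCG(S_g)$ with $\lambda(\widetilde{f_h})=\alpha^{k}$, and then check that $\alpha^{k}$ is again a Salem number of degree $2h$.

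First I would fix the branch locus. A pseudo-Anosov map has infinitely many periodic points, and only finitely many points of $S_h$ are singularities of its invariant foliations, so we may choose $2m$ distinct points $P=\{p_{1},\dots,p_{2m}\}$ of $S_h$, each periodic under $f_h$ and regular for the foliations. Since $2m$ is even, there is a surjection $\phi:\pi_{1}(S_h\setminus P)\to\mathbb{Z}/2\mathbb{Z}$ sending the meridian of each $p_{i}$ to $1$; the connected double cover of $S_h\setminus P$ determined by $\phi$, completed over $P$, is a branched double cover $\pi: S'\to S_h$. By Riemann--Hurwitz, $\chi(S')=2\chi(S_h)-2m=4-4h-2m$, so $S'$ has genus $2h+m-1=g$; thus $S'=S_g$. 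Because each $p_{i}$ is periodic under $f_h$, some power of $f_h$ fixes $P$ pointwise, and --- enlarging the exponent if necessary, using that there are only finitely many branched double covers of $S_h$ over $P$ of this type --- some power $f=f_h^{k}$ both fixes $P$ pointwise and preserves $\pi$; since $\pi$ is a normal cover, $f$ lifts through it.

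Next I would pass to the cover. Let $\widetilde{f_h}$ be a lift of $f$. The invariant measured foliations of $f$ pull back to $\widetilde{f_h}$-invariant measured foliations on $S_g$; the only new singularities lie over $P$, and since each $p_{i}$ is a regular point downstairs and $\pi$ has ramification index $2$ there, each such singularity is $4$-pronged. Hence the pulled-back foliations form a genuine pseudo-Anosov structure on the closed surface $S_g$, $\widetilde{f_h}$ is pseudo-Anosov, and $\lambda(\widetilde{f_h})=\lambda(f)=\alpha^{k}$. It remains to analyze $\alpha^{k}$ as an algebraic number. The crucial point is that $\mathbb{Q}(\alpha^{k})=\mathbb{Q}(\alpha)$, so that $\deg\alpha^{k}=2h$: otherwise the minimal polynomial of $\alpha$ over $\mathbb{Q}(\alpha^{k})$ is a proper divisor of $x^{k}-\alpha^{k}$, hence has a root $\zeta\alpha$ with $\zeta$ a root of unity and $\zeta\neq 1$; this $\zeta\alpha$ is a Galois conjugate of $\alpha$ with $|\zeta\alpha|=\alpha>1$, contradicting that $\alpha$ is the unique conjugate of $\alpha$ of modulus greater than one. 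Consequently the $2h$ conjugates of $\alpha^{k}$ are exactly the $k$-th powers of those of $\alpha$: one is $\alpha^{k}>1$, one is $\alpha^{-k}\in(0,1)$, and the remaining $2h-2$ lie on the unit circle; a Salem number has no root-of-unity conjugate (otherwise its minimal polynomial, being irreducible, would be cyclotomic), so none of these $2h-2$ conjugates is a $2k$-th root of unity, whence its $k$-th power is a non-real point of the unit circle. Therefore $\alpha^{k}$ is a Salem number of degree $2h$, and $\widetilde{f_h}$ has all the asserted properties.

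The step I expect to be the crux is this last one: showing that passing to a power does not lower the algebraic degree of the stretch factor --- equivalently, $\mathbb{Q}(\alpha^{k})=\mathbb{Q}(\alpha)$ for a Salem number $\alpha$ --- and dovetailing it with a single branched cover that is equivariant under a power of $f_h$ and has target genus exactly $g$ for every admissible $h$. Once the double cover branched over an even number of regular periodic points of $f_h$ is set up, the Riemann--Hurwitz bookkeeping and the equivariance are routine, and the essential input is the elementary fact just used about powers of Salem numbers.
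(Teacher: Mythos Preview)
Your argument follows the same strategy as the paper's: build a branched cover $S_g\to S_h$, lift a power of the Theorem~\ref{thm:mainB} map $f_h$, and then verify that powering a Salem number preserves both the Salem property and the algebraic degree. Your execution differs in two places. First, the paper simply exhibits a specific cover in a figure and asserts that some power of $f_h$ lifts, whereas you construct the cover intrinsically, branching over an even set of regular periodic points and using Riemann--Hurwitz and finiteness of index-two subgroups to justify the lift; this is more self-contained. Second, for the degree claim $\deg(\alpha^k)=2h$ the paper forms $p(x)=\prod_i(x-\lambda_i^k)$, notes it has no cyclotomic factor (appealing to Section~\ref{section:irreducibility}), and invokes Kronecker, while you give the cleaner field-theoretic argument that $\mathbb{Q}(\alpha^k)=\mathbb{Q}(\alpha)$ since a nontrivial extension would force a conjugate $\zeta\alpha$ of modulus $\alpha>1$. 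Both arguments are correct; yours avoids the back-reference to the irreducibility section.

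There is one genuine omission: you restrict to $2\le h\le g/2$, but the corollary is stated for every positive integer $h\le g/2$, so the case $h=1$ is missing. The paper handles this separately by starting from an Anosov map on the torus and lifting a power along a branched cover $S_g\to S_1$; you should add this sentence. (Note also that for $h=1$ the Salem clause in the statement cannot hold under the paper's definition, since a degree-two unit has no conjugate on the unit circle; the paper's own proof quietly drops the Salem claim in that case, so you need not worry about it.)
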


\begin{proof}
Let
\[
h = \left\{ \begin{array}{ll}
			\frac{g-2m}{2}, & \textrm{if $g$ is even, $m=0, 1, \ldots, (g-2)/2$,}\\
			&\\
			\frac{g-1 -2m}{2}, & \textrm{if $g$ is odd, $m=0, 1, \ldots, (g-3)/2$.}
\end{array} \right.
\]
Then $h$ is an integer such that $1 \leq h \leq g/2$. 
\begin{figure}[htb]
\labellist \small\hair 2pt
\pinlabel {$S_g$} [l] at 22 350
\pinlabel {$S_g$} [l] at 298 350
\pinlabel {$S_h$} [l] at 40 126
\pinlabel {$S_h$} [l] at 318 126
\pinlabel {$g$ even} at 114 10
\pinlabel {$g$ odd} at 393 13
\endlabellist
\centering
\includegraphics[scale=0.65]{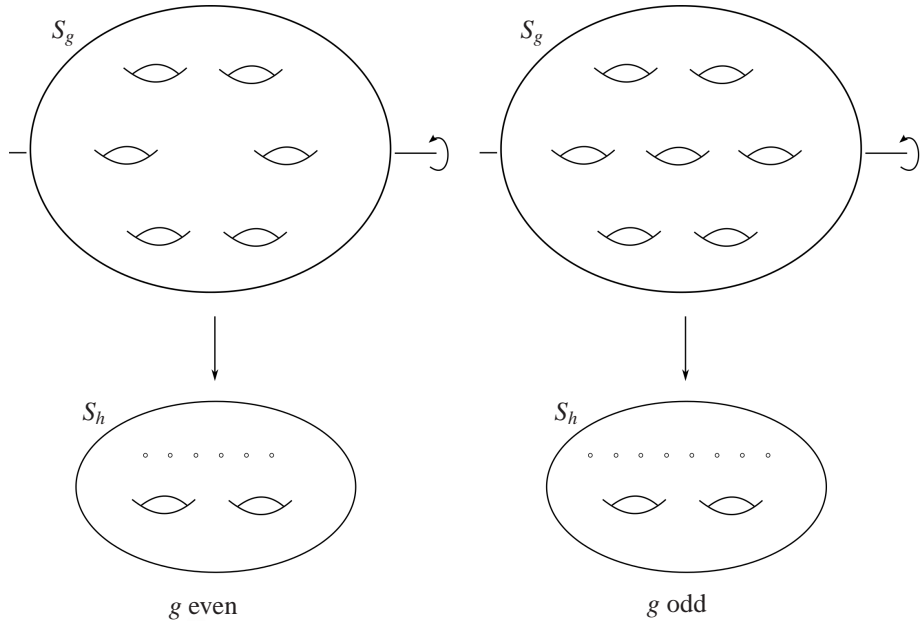}
\caption{A branched cover}
\label{figure:branched}
\end{figure}

Construct a branched cover $S_g \rightarrow S_h$ as in Figure \ref{figure:branched}. 
For $h \geq 2$, $S_h$ has a pseudo-Anosov mapping class $f_h \in \MCG(S_h)$ as in the Theorem \ref{thm:mainB} whose stretch factor has $\deg(\lambda(f_h)) = 2h$. For some $k$, ${f_{h}}^k$ lifts to $S_g$ and the lift has stretch factor $\lambda(f_h)^k$. We claim that $\deg\left( \lambda(f_h)^k \right) = 2h$. 
To see this, let $\lambda_i$, $1 \leq i \leq 2h$, be the roots of the minimal polynomial of $\lambda(f_h)$ and let us define a polynomial 
\[
p(x) = \prod_{i=1}^{2h} \left( x - \lambda_i^k \right).
\]
Then $p(x)$ is an integral polynomial because the following elementary symmetric polynomials in $\lambda_1, \ldots, \lambda_{2h}$ 
\[
\sum \lambda_i, \ \sum_{i<j} \lambda_i \lambda_j, \ \sum_{i<j<l} \lambda_i \lambda_j\lambda_l, \ \cdots, \ \lambda_1\lambda_2\cdots \lambda_{2h}
\]
are all integers and hence each coefficient of $p(x)$
\[
\sum \lambda_i^k, \ \sum_{i<j} \lambda_i^k \lambda_j^k, \ \sum_{i<j<l} \lambda_i^k \lambda_j^k \lambda_l^k, \ \cdots, \ \lambda_1^k\lambda_2^k\cdots \lambda_{2h}^k
\]
are integers as well. Therefore $p(x)$ is divided by the minimal polynomial of $\st(f_h)^k$. 
Due to the proof of Theorem \ref{thm:mainB} in section \ref{section:irreducibility},
$\lambda(f_h)^{k}$ is also a Salem number and $p(x)$ does not have a cyclotomic factor.
This implies that $p(x)$ is irreducible and $\deg\left( \lambda(f_h)^k \right) = 2h$.

If $h=1$, $S_h$ is a torus and it admits a Anosov mapping class $f$ whose stretch factor $\lambda(f)$ has algebraic degree $2$. Then similar arguments as above tells us that there is a lift of some power of $f$ to $S_g$ whose stretch factor has $\deg(\st(f^k))=2$.

Therefore there is a pseudo-Anosov map $\widetilde{f_h} \in \MCG(S_g)$ with $\deg(\lambda(\widetilde{f_h})) = 2h$ for each $h \leq g/2$.
In other words, every positive even degree $d \leq g$ is realized as the algebraic degree of a stretch factor on $S_g$.
\end{proof}

\medskip
\section{Stretch factors of odd degrees} \label{section:odd}
Long proved the following degree obstruction and McMullen communicated to us the following proof. First we will give a definition of the reciprocal polynomial. Given a polynomial $p(x)$ of degree $d$, we define the reciprocal polynomial $p^*(x)$ of $p(x)$ by $p^*(x)=x^d p(1/x)$. It is a well-known property that $p^*(x)$ is irreducible if and only if $p(x)$ is irreducible.
\begin{theorem}[\cite{long}]
Let $f \in \MCG(S_g)$ be a pseudo-Anosov mapping class having stretch factor $\lambda(f)$. If $\deg(\lambda(f)) > 3g-3$, then $\deg(\lambda(f))$ is even.
\end{theorem}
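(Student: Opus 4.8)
The plan is to use the fact that $\lambda(f)$ is an algebraic integer whose minimal polynomial is palindromic (reciprocal), which follows from Thurston's theorem that $\lambda(f)$ is the largest-magnitude root of a monic palindromic polynomial arising as (a factor of) the characteristic polynomial of the action on homology. First I would let $p(x)$ be the minimal polynomial of $\lambda(f)$ and record that $p(x)$ divides the characteristic polynomial $c(x)$ of $f_\ast$ on $H_1(S_g;\mathbb{R})$, which has degree $2g$; so $\deg \lambda(f) \le 2g$, but more importantly $c(x)$ is reciprocal (up to sign, since $f_\ast$ preserves the symplectic intersection form, so $c(x) = x^{2g} c(1/x)$). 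The key dichotomy is then whether the minimal polynomial $p(x)$ of $\lambda(f)$ is itself reciprocal.

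If $p(x)$ is reciprocal, then $\lambda(f)^{-1}$ is also a root of $p(x)$, and since $\lambda(f) \ne \lambda(f)^{-1}$ the roots of $p(x)$ pair off as $\{\mu, \mu^{-1}\}$ (a root $\mu$ on the unit circle would have $\mu = \bar\mu^{-1}$, but $p$ has real — indeed rational — coefficients so such roots also come in pairs $\{\mu,\bar\mu\}$); in any case the involution $x \mapsto x^{-1}$ on the root set is fixed-point-free, because a fixed point would be $\pm 1$, which is not a root of the minimal polynomial of $\lambda(f) > 1$. Hence $\deg p(x)$ is even and we are done in this case. If instead $p(x)$ is \emph{not} reciprocal, then the reciprocal polynomial $p^\ast(x) = x^d p(1/x)$ is a distinct monic irreducible polynomial (irreducible by the well-known property cited, monic because $\lambda(f)^{-1} \ne 0$ and $p$ is monic with nonzero constant term, as $\lambda(f)$ is a unit — being a root of a palindromic integer polynomial), and both $p(x)$ and $p^\ast(x)$ divide $c(x)$: $p \mid c$ since $p$ is the minimal polynomial of an eigenvalue, and $p^\ast \mid c$ since $c$ is reciprocal and $p \mid c$ forces $p^\ast \mid c^\ast = \pm c$. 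Since $p$ and $p^\ast$ are distinct and irreducible, their product $p(x)p^\ast(x)$ divides $c(x)$, giving $2 \deg(\lambda(f)) = \deg(p p^\ast) \le \deg c = 2g$, hence $\deg(\lambda(f)) \le g \le 3g-3$ for $g \ge 2$ — wait, $g \le 3g-3$ holds exactly when $g \ge 3/2$, i.e. always here — contradicting the hypothesis $\deg(\lambda(f)) > 3g-3$.

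So under the hypothesis $\deg(\lambda(f)) > 3g-3$, the non-reciprocal case is impossible, $p(x)$ must be reciprocal, and therefore $\deg(\lambda(f))$ is even. The main obstacle I anticipate is justifying cleanly that $c(x)$ is reciprocal and that divisibility of a reciprocal polynomial by an irreducible $p$ forces divisibility by $p^\ast$ — this is standard (it follows from $f_\ast \in \mathrm{Sp}(2g,\mathbb{Z})$, whose characteristic polynomial satisfies $x^{2g}c(1/x) = c(x)$, and from $c^\ast = \pm c$ together with unique factorization in $\mathbb{Q}[x]$) but should be stated carefully. A secondary point requiring care is ruling out $\pm 1$ as roots of $p$ and confirming that the root-pairing $\mu \leftrightarrow \mu^{-1}$ on a reciprocal irreducible polynomial of a non-root-of-unity has no fixed points, which forces even degree.
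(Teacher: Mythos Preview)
There is a genuine gap. Your argument rests on the claim that $\lambda(f)$ is an eigenvalue of $f_\ast$ acting on $H_1(S_g;\mathbb{R})$, so that its minimal polynomial divides the degree-$2g$ characteristic polynomial of $f_\ast$. This is false in general: the paper itself records that the spectral radius $\lambda_H(f)$ of $f_\ast$ on $H_1$ satisfies only $\lambda_H(f)\le\lambda(f)$, with equality exactly when the invariant foliations are orientable. In particular $\lambda(f)$ need not be a root of the homological characteristic polynomial at all. Your argument, if it worked, would prove the stronger statement ``$\deg(\lambda(f))>g$ implies $\deg(\lambda(f))$ is even,'' but the genus~$2$ example in Table~1 with $\deg(\lambda(f))=3>2=g$ shows this is false.

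The fix is exactly what the paper does: replace the $2g$-dimensional homology with the $(6g-6)$-dimensional space $\mathcal{PMF}$ of projective measured foliations, on which $f$ acts by piecewise integral projective transformations preserving a symplectic structure. There $\lambda(f)$ genuinely is an eigenvalue, and it is a root of a palindromic integral polynomial of degree at most $6g-6$. With $6g-6$ in place of your $2g$, your dichotomy goes through verbatim: if the minimal polynomial $q$ is not self-reciprocal, then $q$ and $q^\ast$ are distinct irreducibles both dividing the palindromic polynomial, forcing $2\deg(\lambda(f))\le 6g-6$, contradicting the hypothesis; hence $q=q^\ast$ and the degree is even. So your overall strategy is the same as the paper's---the error is only in the choice of the ambient representation.
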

\begin{proof}
Since $f$ acts by a piecewise integral projective transformation on the $6g-6$ dimensional space $\PMF$ of projective measured foliations on $S_g$, and since $\lambda(f)$ is an eigenvalue of this action, $\st(f)$ is an algebraic integer with $\deg(\st(f)) \leq 6g-6$. Also, since $f$ preserves the symplectic structure on $\PMF$, it follows that $\st(f)$ is the root of palindromic polynomial $p(x)$ whose degree is bounded above by $6g-6$. 

Let $q(x)$ be the minimal polynomial of $\st(f)$ and let $q^{*}(x)$ be the reciprocal polynomial of $q(x)$. Then either $q(x) = q^{*}(x)$ or they have no common roots, because if there is at least one common root $\zeta$ of $q(x)$ and $q^{*}(x)$, then both $q(x)$ and $q^{*}(x)$ are the minimal polynomial of $\zeta$ and hence $q(x) = q^{*}(x)$. Suppose $\deg(q(x)) > 3g-3$. If $q(x)$ and $q^{*}(x)$ have no common roots, then their product $q(x) \, q^{*}(x)$ is a factor of $p(x)$ since $q^{*}(x)$ is the minimal polynomial of $1/\st(f)$. This is a contradiction because $\deg(p(x)) \leq 6g-6$ but $\deg\big(q(x) \, q^{*}(x)\big) > 6g-6$. Therefore we must have $q(x) = q^{*}(x)$ and this implies that $q(x)$ is an irreducible palindromic polynomial. Hence $\deg(q(x))$ is even since roots of $q(x)$ come in pairs, $\lambda_i$ and $1/\lambda_i$.
\end{proof}

It follows from the previous proof that if the minimal polynomial $p(x)$ of $\st$ has odd degree, then $p(x)$ is not palindromic and in fact the minimal palindromic polynomial containing $\st$ as a root is $p(x) p^*(x)$.

\medskip
We will now show that the stretch factors of degree 3 have an additional special property. A \textit{Pisot number}, also called a \textit{Pisot--Vijayaraghavan number} or a \textit{PV number}, is an algebraic integer greater than 1 such that all its Galois conjugates are strictly less than 1 in absolute value.

\begin{prop}
\label{thm:pisot}
Let $f \in \MCG(S_g)$. If $\deg(\st(f)) =3$, then $\st(f)$ is a Pisot number.
\end{prop}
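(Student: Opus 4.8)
The plan is to deduce the Pisot property from the single structural fact that Thurston's theorem imposes on any stretch factor — that $\lambda := \st(f)$ is the root of largest absolute value of a monic palindromic integer polynomial $p(x)$ — together with the arithmetic rigidity of a cubic extension. Write $q(x)$ for the minimal polynomial of $\lambda$, so $q$ has degree $3$ and roots $\lambda, \alpha, \beta$, with $\lambda > 1$ real. First I would record two consequences of $p$ being monic and palindromic. Its constant term is $\pm 1$, so the product of all roots of $p$ equals $\pm 1$; since $\lambda$ is one of these roots and all of them are algebraic integers, $\lambda$ divides a unit and is therefore itself a unit. Hence its norm satisfies $\lambda\alpha\beta = \pm 1$, so $|\alpha|\,|\beta| = 1/\lambda < 1$. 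Moreover, since $q \mid p$ the conjugates $\alpha, \beta$ are roots of $p$, and because the root set of a palindromic polynomial is invariant under $z \mapsto 1/z$, the numbers $1/\alpha$ and $1/\beta$ are roots of $p$ as well. As $\lambda$ dominates every root of $p$ in absolute value, $|\lambda| \ge 1/|\alpha|$ and $|\lambda| \ge 1/|\beta|$, i.e.\ $|\alpha|, |\beta| \ge 1/\lambda$.

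Combining the two relations yields the non-strict bound immediately: $1/\lambda = |\alpha|\,|\beta| \ge (1/\lambda)\,|\beta|$ forces $|\beta| \le 1$, and symmetrically $|\alpha| \le 1$. The remaining task — upgrading these to strict inequalities — is the only delicate point, and it is exactly where the hypothesis $\deg \lambda = 3$ and the irreducibility of the cubic $q$ are used. Suppose $|\alpha| = 1$. If $\alpha$ is real, then $\alpha = \pm 1$ and $x \mp 1$ divides $q$, contradicting irreducibility. If $\alpha$ is not real, then $\overline{\alpha}$ is also a Galois conjugate of $\lambda$; since $\lambda$ is real and the conjugates of $\lambda$ are just $\lambda, \alpha, \beta$, we must have $\overline{\alpha} = \beta$, whence $|\beta| = |\alpha| = 1$ and therefore $\lambda = 1/(|\alpha|\,|\beta|) = 1$, contradicting $\lambda > 1$. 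Hence $|\alpha| < 1$, and the same argument gives $|\beta| < 1$. Thus every Galois conjugate of $\lambda$ other than $\lambda$ itself lies strictly inside the unit circle, which is precisely the assertion that $\lambda$ is a Pisot number.

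I expect the main obstacle to be nothing computational but rather this last step: ruling out a conjugate of $\lambda$ on the unit circle. The case analysis above works because there are only two conjugates besides $\lambda$, so that in the complex case $\overline{\alpha}$ is forced to equal $\beta$; for odd degrees larger than $3$ the same strategy would only give $1/\lambda \le |\alpha| \le \lambda$ for the conjugates and not the Pisot conclusion. I would also remark, parenthetically, that since $3$ is odd the minimal polynomial $q$ cannot be palindromic — an irreducible palindromic polynomial of odd degree would have $-1$ as a root — so $q$ and $q^{*}$ are distinct and $1/\lambda$ is not a conjugate of $\lambda$; this is consistent with the picture above but is not needed for the estimate.
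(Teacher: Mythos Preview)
Your argument is correct and follows essentially the same route as the paper's: both use that $|\lambda\alpha\beta|=1$ (the constant term of the minimal cubic is $\pm 1$, since $q$ divides a palindromic polynomial) together with the dominance of $\lambda$ among the roots of a palindromic polynomial containing $q$, to force the other two conjugates inside the unit disk. The only difference is in how strictness is obtained: the paper invokes directly that the stretch factor is \emph{strictly} larger than every other root of $p\,p^{*}$, so $1/|\lambda_3|\ge|\lambda_1|$ is already a contradiction; you instead use only the non-strict bound $|\alpha|,|\beta|\le 1$ and then eliminate the boundary case $|\alpha|=1$ by the degree-$3$ observation that a non-real conjugate on the unit circle would force $\beta=\overline{\alpha}$ and hence $\lambda=1$. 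Your endgame is a touch more self-contained (it does not appeal to strict Perron--Frobenius dominance), while the paper's is a line shorter; the two are otherwise the same proof.
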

\begin{proof}
Let $\lambda_1>1$ be the stretch factor of a pseudo-Anosov mapping class with algebraic degree 3, and let $p(x)$ be the minimal polynomial of $\lambda_1$. Let $\lambda_1, \lambda_2,$ and $\lambda_3$ be the roots of $p(x)$. Then the degree of $p(x) p^*(x)$ is 6 and it has pairs of roots $(\lambda_1, 1/\lambda_1), (\lambda_2, 1/\lambda_2), (\lambda_3, 1/\lambda_3)$, where $\lambda_1$ is the largest root in absolute value. We claim that the absolute values of $\lambda_2$ and $\lambda_3$ are strictly less than 1.

Suppose one of them has absolute value greater than or equal to 1, say $|\lambda_2| \geq 1$. The constant term $\lambda_1 \lambda_2 \lambda_3$ of $p(x)$ is $\pm 1$ since it is the factor of a palindromic polynomial with constant term 1. Hence $|\lambda_1 \lambda_2 \lambda_3|=1$ and we have
\[
\frac{1}{|\lambda_3|} = |\lambda_1 \lambda_2| \geq  |\lambda_1|,
\]
which is a contradiction to the fact that the stretch factor $\lambda_1$ is strictly greater than all other roots of the palindromic polynomial $p(x) p^*(x)$. This proves the claim and hence the stretch factor of degree 3 is a Pisot number.
\end{proof}
\medskip
We now explain two constructions of mapping classes $f \in \MCG(S_g)$ whose degree of $\st(f)$ is odd.

1. As we mentioned, Arnoux--Yoccoz \cite{arnouxyoccoz}  gave examples of a pseudo-Anosov mapping class on $S_g$ whose stretch factor has algebraic degree $g$. In particular for odd $g$, this gives examples of mapping classes with odd degree stretch factors. They proved that these stretch factors are all Pisot numbers.

2. For genus $2$, there is a pseudo-Anosov mapping class $f$ whose stretch factor has algebraic degree 3 (see section \ref{section:examples}). This is the only possible odd degree on $S_2$ by Long's obstruction. It is also true that $\deg (\lambda(f)^k) = 3$ for each $k$ because the stretch factor is a Pisot number (Proposition \ref{thm:pisot}). There is a cover $S_g \rightarrow S_2$ for each $g$, so the lift of some power of $f$ has a stretch factor with algebraic degree 3 on $S_g$.

\begin{prop}
For each genus $g$, the stretch factor with algebraic degree 3 can occur on $S_g$.
\end{prop}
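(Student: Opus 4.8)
The plan is to combine the existence of a degree-$3$ stretch factor on $S_2$ (produced in section \ref{section:examples}) with the covering-space principle, exactly in the spirit of the proof of Corollary \ref{thm:even_degree} and of item~2 in the discussion above.

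I would first fix a pseudo-Anosov mapping class $f \in \MCG(S_2)$ with $\deg(\st(f)) = 3$, which exists by the examples in section \ref{section:examples}. By Proposition \ref{thm:pisot}, $\st(f)$ is a Pisot number; moreover, as observed in the proof of that proposition, the minimal polynomial $p(x)$ of $\st(f)$ is a factor of a palindromic polynomial with constant term $1$, so $\st(f)$ is an algebraic unit. Next I would show that $\deg\big(\st(f)^k\big) = 3$ for every $k \geq 1$. Write $\lambda_1 = \st(f), \lambda_2, \lambda_3$ for the roots of $p(x)$, so that $|\lambda_2|, |\lambda_3| < 1$ since $\lambda_1$ is Pisot, and set $P(x) = \prod_{i=1}^{3}\big(x - \lambda_i^k\big)$. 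As in the proof of Corollary \ref{thm:even_degree}, the elementary symmetric functions of $\lambda_1^k, \lambda_2^k, \lambda_3^k$ are integers, so $P(x) \in \mathbb{Z}[x]$ is monic with $P(0) = -(\lambda_1\lambda_2\lambda_3)^k = \pm 1$. The minimal polynomial $m(x)$ of $\lambda_1^k$ divides $P(x)$, and the quotient $P(x)/m(x)$ is a monic integer polynomial whose roots lie among $\lambda_2^k, \lambda_3^k$ and hence have modulus $< 1$; were it non-constant, its constant term would be a nonzero integer of absolute value $< 1$, a contradiction. Therefore $m(x) = P(x)$ and $\deg\big(\st(f)^k\big) = 3$; in particular $\st(f)^k$ is again a Pisot number.

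Finally, for $g \geq 3$ I would take an unbranched cover $S_g \to S_2$ — for instance the cyclic cover of degree $g-1$, which has Euler characteristic $-2(g-1) = \chi(S_g)$ and hence genus $g$ — and for $g = 2$ take the identity cover. By the lifting principle recalled at the beginning of section \ref{section:covers}, some power $f^k$ of $f$ lifts to a pseudo-Anosov mapping class $\widetilde{f} \in \MCG(S_g)$ with $\st(\widetilde{f}) = \st(f^k) = \st(f)^k$. By the previous paragraph $\deg\big(\st(\widetilde{f})\big) = 3$, which proves the proposition.

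The only point requiring genuine care — everything else being a direct appeal to section \ref{section:examples}, Proposition \ref{thm:pisot}, and the lifting statement at the start of section \ref{section:covers} — is the degree computation $\deg\big(\st(f)^k\big) = 3$: one must use both that $\st(f)$ is a unit (to get $P(0) = \pm 1$) and the Pisot property (so the conjugates $\lambda_2, \lambda_3$ are small) in order to force the candidate polynomial $P(x)$ to be irreducible and so avoid any collapse of the algebraic degree under taking powers.
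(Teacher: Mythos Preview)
Your argument is correct and follows the same route as the paper: start from the degree-$3$ example on $S_2$, invoke Proposition \ref{thm:pisot} to see the stretch factor is Pisot, deduce that $\deg\big(\st(f)^k\big)=3$ for all $k$, and then lift a suitable power through a cover $S_g\to S_2$. You in fact supply more detail than the paper does on the key step---the paper simply asserts that the degree is preserved under powers ``because the stretch factor is a Pisot number,'' whereas you spell out the Kronecker-type argument using both the unit and the Pisot properties---and you also make explicit which cover to use.
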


\begin{ques}
Are there stretch factors with odd algebraic degree that are not Pisot numbers?
\end{ques}

\medskip
\section{Examples of even degrees} \label{section:examples}
Tables 1 through 4 give explicit examples of pseudo-Anosov mapping classes whose stretch factors realize various degrees. We will follow the notation of the software $\texttt{Xtrain}$ by Brinkmann. More specifically, $a_i, b_i, c_i,$ and $d_i$ are Dehn twists along standard curves and $A_i, B_i, C_i,$ and $D_i$ are the inverse twists as in \cite{brinkmann}. The only missing degree on $S_3$ is degree 5. We do not know if there is a degree 5 example or there is another degree obstruction.
{\small
\begin{table}[h]
\begin{center}
\renewcommand{\arraystretch}{1.2}
\begin{tabular}[c]{|c|l|l|c|}
\hline
deg & $\hspace{0.6em} f \in \MCG(S_2)$ & Minimal polynomial & $\lambda(f)$\\
\hline
2 & $a_0a_0d_0C_0D_1C_0$ & $x^2 - 3x + 1$ & $ \lambda =2.618$ \\
\hline
3 & $a_0d_0d_0C_0C_0D_1$ & $x^3 - 3x^2 -x -1$ & $ \lambda = 3.383$ \\
\hline
4 & $a_0d_0d_0d_1c_0d_0$ & $ x^4 - x^3 - x^2 -x + 1 $ & $ \lambda = 1.722$ \\
\hline
6 & $a_0a_0d_0A_0C_0D_1$ & $x^6 - x^5 - 4x^3 -x +1$ & $ \lambda = 2.015$ \\
\hline
\end{tabular}
\vspace{0.5em}
\caption{Examples of genus 2}
\end{center}
\end{table}
}

\vspace{0em}
\begin{table}[h]
{\small
\begin{center}
\renewcommand{\arraystretch}{1.2}
\begin{tabular}[c]{|c|l|l|c|}
\hline
deg & $\hspace{1em} f \in \MCG(S_3)$ & \hspace{6em} Minimal polynomial & $\lambda(f)$\\
\hline
2 & $a_1c_0d_0c_0d_2C_1D_1$ & {\footnotesize$x^2 - 4x + 1$} & $ 3.732$ \\
\hline
3 & $a_0c_0d_0C_1D_1D_2$ & {\footnotesize$x^3-2x^2+x-1$} & $  1.755$ \\
\hline
4 & $a_1c_0d_0a_1c_1d_1d_2$ & {\footnotesize$ x^4 - x^3 - 2x^2 - x + 1 $} & $ 1.722$ \\
\hline
6 & $a_0c_0d_0d_2C_1D_1$ & {\footnotesize$x^6 - 3x^5 + 3x^4 - 7x^3 + 3x^2 - 3x + 1$} & $ 2.739$ \\
\hline
8 & $a_0c_0d_0d_1C_1D_2$ & {\footnotesize$x^8-x^7-2x^5-2x^3-x+1$} & $ 1.809$ \\
\hline
10 & $a_1c_0d_0d_1C_1A_2D_2$ & {\footnotesize$x^{10} - x^9 - 2x^8 + 2x^7 - 2x^5 + 2x^3 - 2x^2 - x + 1$} & $  1.697$ \\
\hline
12 & $a_1c_1c_0d_1d_2A_0D_0$ & {\footnotesize $x^{12} - x^{11} - x^9 -x^8 + x^7 + x^5 - x^4 -x^3 -x +1$} & $  1.533$ \\
\hline
\end{tabular}
\vspace{0.5em}
\caption{Examples of genus 3}
\end{center}
}
\end{table}

\begin{table}[h]
{\small
\begin{center}
\renewcommand{\arraystretch}{1.2}
\begin{tabular}[c]{|c|l||c|l|}
\hline
deg & $\hspace{1.6em} f \in \MCG(S_4)$ & deg & $\hspace{1.6em} f \in \MCG(S_4)$ \\
\hline
4 & $a_0a_0a_1c_0d_0c_1d_1c_2d_2c_3d_3$ & 12 & $a_0B_1d_0c_0d_1c_1d_2c_2d_3c_3$\\
\hline
6 & $a_0B_2A_3d_0c_0d_1c_1d_2c_2d_3c_3$ & 14 & $a_0d_0B_0d_0c_0d_1c_1d_2c_2d_3c_3$ \\
\hline
8 & $a_0A_1d_0c_0d_1c_1d_2c_2d_3c_3$ & 16 & $A_0d_0c_0d_1c_1d_2c_2d_3c_3$ \\
\hline
10 & $a_0b_1A_2d_0c_0d_1c_1d_2c_2d_3c_3$ & 18 & $a_0B_1A_2d_0c_0d_1c_1d_2c_2d_3c_3$\\
\hline
\end{tabular}
\vspace{0.5em}
\caption{Examples of genus 4}
\end{center}}
\end{table}

\begin{table}[h!]
{\small
\begin{center}
\renewcommand{\arraystretch}{1.2}
\begin{tabular}[c]{|c|l||c|l|}
\hline
deg & $\hspace{2em} f \in \MCG(S_5)$ & deg & $\hspace{2em} f \in \MCG(S_5)$ \\
\hline
6 & $b_3d_0c_0d_1c_1d_2c_2d_3c_3d_4c_4$ & 16 & $a_1B_2d_0c_0d_1c_1d_2c_2d_3c_3d_4c_4$\\
\hline
8 & $a_0a_1d_0c_0d_1c_1d_2c_2d_3c_3d_4c_4$ & 18 & $a_1B_0d_0c_0d_1c_1d_2c_2d_3c_3d_4c_4$\\
\hline
10 & $a_1A_4d_0c_0d_1c_1d_2c_2d_3c_3d_4c_4$ & 20 & $a_1A_0d_0c_0d_1c_1d_2c_2d_3c_3d_4c_4$\\
\hline
12 & $b_2C_2d_0c_0d_1c_1d_2c_2d_3c_3d_4c_4$ & 22 & $a_2A_1d_0c_0d_1c_1d_2c_2d_3c_3d_4c_4$\\
\hline
14 & $a_1B_1d_0c_0d_1c_1d_2c_2d_3c_3d_4c_4$ & 24 & $c_2A_2d_0c_0d_1c_1d_2c_2d_3c_3d_4c_4$ \\
\hline
\end{tabular}
\vspace{0.5em}
\caption{Examples of genus 5}
\end{center}}
\end{table}

\medskip
\section{Irreducibility of Polynomials} \label{section:irreducibility}
In this section, we will prove Theorem \ref{thm:mainB}.
It is enough to show that the polynomial 
\[
p_n(x) = x^{2n} - 2\left( \sum_{j=1}^{2n-1} x^j \right) + 1.
\]
is irreiducible for $n \geq 2$.
We will show that $p_n(x)$ does not have a cyclotomic polynomial factor. It then follows from Kronecker's theorem that $p_n(x)$ is irreducible.

Suppose $p_n(x)$ has the $m$th cyclotomic polynomial factor for some $m \in \N$.
Then $e^{2\pi i/m}$ is a root of $p_n(x)$.
Multiplying $p_n(x)$ by $x-1$ yields
$$x^{2n+1} -3x^{2n} +3x -1$$
and hence we have 
\begin{equation}
\label{eqn:cyclo}
\ e^{2(2n+1)\pi i /m} -3e^{4n\pi i /m}+3e^{2\pi i /m}-1=0.
\end{equation}
Consider the real part and the complex part of (\ref{eqn:cyclo}). Then we have the system of equations
\[
 \left\{ \begin{array}{l}
         \cos\big(\frac{2(2n+1)\pi}{m}\big) -3\cos\big(\frac{4n\pi}{m}\big) +3\cos\big(\frac{2\pi}{m}\big)-1=0\\
	 \sin\big(\frac{2(2n+1)\pi}{m}\big) -3\sin\big(\frac{4n\pi}{m}\big) +3\sin\big(\frac{2\pi}{m}\big)=0\\
        \end{array} \right.
\]
Using double-angle formula for the first cosine and sum-to-product formula for the last two cosines, the first equation gives
\[
2\sin\Big(\frac{(2n+1)\pi}{m}\Big) \Big[ 3\sin\Big(\frac{(2n-1)\pi}{m}\Big) -\sin\Big(\frac{(2n+1)\pi}{m}\Big)\Big] =0.
\]
Similarly the second equation gives
\[
 2\cos\Big(\frac{(2n+1)\pi}{m}\Big) \Big[ \sin\Big(\frac{(2n+1)\pi}{m}\Big) - 3\sin\Big(\frac{(2n-1)\pi}{m}\Big) \Big] =0.
\]
Since sine and cosine have no common zeros, we must have
\begin{equation*}
 \sin\Big(\frac{(2n+1)\pi}{m}\Big) - 3\sin\Big(\frac{(2n-1)\pi}{m}\Big)=0.
\end{equation*}
For $m \leq 5$, by direct calculation we can see that $p_n(e^{2\pi i/m}) \neq 0$. So we may assume that $m \geq 6$. 
Let $\varphi = (2n-1)\pi/m$ and then we can write the above equation as
\begin{equation}
\label{eqn:sin}
 \sin\Big(\varphi + \frac{2\pi}{m}\Big) - 3\sin(\varphi)=0.
\end{equation}
Since $\sin\big( \varphi + 2\pi/m \big)$ is a real number between $-1$ and $1$, we have
\begin{equation}
\label{eqn:range}
  -\frac{1}{3} \leq \sin(\varphi) \leq \frac{1}{3}.
\end{equation}
Let $\psi = \sin^{-1}(1/3)$. Then note that $\psi < \pi/6$. 
Equation (\ref{eqn:range}) gives the restriction on $\varphi$, which is
\[
 -\psi \leq \varphi \leq \psi \ \ \textrm{or} \ \ \pi -\psi \leq \varphi \leq \pi + \psi.
\]
Another observation from (\ref{eqn:sin}) is that both $\sin\big( \varphi + 2\pi/m \big)$ and $\sin(\varphi)$ must have the same sign.

We claim that $\varphi$ has to be on the either first or third quadrant. Suppose $\varphi$ is on the second quadrant, that is, $ \pi -\psi < \varphi < \pi$. Note that $m \geq 6$ implies $ 2\pi/m \leq \pi/3$. Since $\varphi$ is above the $x$-axis, $\varphi + 2\pi/m$ also has to be above the $x$-axis due to (\ref{eqn:sin}) and hence the only possibility is that $\varphi + 2\pi/m$ is between $\varphi$ and $\pi$. Then
\[
  0<\sin\Big(\varphi + \frac{2\pi}{m}\Big) < \sin(\varphi) \ \implies \  \sin\Big(\varphi + \frac{2\pi}{m}\Big) < 3\sin(\varphi),
\]
which is a contradiction to (\ref{eqn:sin}). Similar arguments hold if $\varphi$ is on the fourth quadrant. Therefore the possible range for $\varphi$ is
\[
 0<\varphi \leq \psi \ \ \textrm{or} \ \ \pi < \varphi \leq \pi+\psi.
\]

Suppose $\varphi$ is on the first quadrant. Then so is $\varphi + 2\pi/m$ because
\[
 0<\varphi + \frac{2\pi}{m} \leq \psi + \frac{\pi}{3} < \frac{\pi}{2}.
\]
We can write
\[
 \varphi = \frac{(2n-1)\pi}{m} \equiv \ \frac{j\pi}{m} \pmod{2\pi}
\]
for some positive integer $j$, i.e., $0 < j\pi/m < \pi/2$.

If $j\geq2$, Using the subadditivity of $\sin(x)$ on the first quadrant
\[
 \sin(x+y) \leq \sin(x) + \sin(y),
\]
we have
{\setlength\arraycolsep{2pt}
\begin{eqnarray*}
 \sin\Big(\varphi + \frac{2\pi}{m}\Big) - 3\sin(\varphi) &\leq& \left( \sin(\varphi) + \sin\Big(\frac{2\pi}{m}\Big) \right) - 3\sin(\varphi)\\
 &=&  \sin\Big(\frac{2\pi}{m}\Big) -2\sin(\varphi)\\
&=& \sin\Big(\frac{2\pi}{m}\Big) - 2\sin\Big(\frac{j\pi}{m} \Big) < 0,
\end{eqnarray*}
}
which contradicts (\ref{eqn:sin}).

If $j=1$, using triple-angle formula
{\setlength\arraycolsep{2pt}
\begin{eqnarray*}
 \sin\Big(\varphi + \frac{2\pi}{m}\Big) - 3\sin(\varphi) &=& \sin\Big(\frac{3\pi}{m}\Big) - 3\sin\Big(\frac{\pi}{m} \Big) \\
&=& \left( 3\sin\Big(\frac{\pi}{m}\Big) - 4 \sin^3\Big(\frac{\pi}{m}\Big) \right) - 3\sin\Big(\frac{\pi}{m}\Big)\\
&=& - 4 \sin^3\Big(\frac{\pi}{m}\Big) <0,
\end{eqnarray*}
}
which contradicts (\ref{eqn:sin}) again. Therefore there is no possible $\varphi$ on the first quadrant. By using the same arguments, the fact that $\varphi$ is on the third quadrant gives a contradiction. Therefore we can conclude that $p(x)$ does not have a cyclotomic factor.

We now show that $p_n(x)$ is irreducible over $\Z$. Suppose $p_n(x)$ is reducible and write $p_n(x)=g(x)h(x)$ with non-constant functions $g(x)$ and $h(x)$. There is only one root of $p_n(x)$ whose absolute value is strictly greater than 1. Therefore one of $g(x)$ or $h(x)$ has all roots inside the unit disk. By Kronecker's theorem, this polynomial has to be a product of cyclotomic polynomials, which is a contradiction because $p_n(x)$ does not have a cyclotomic polynomial factor. Therefore $p_n(x)$ is irreducible.

\bibliographystyle{plain}
\bibliography{degree2g}

\end{document}